%
%
%
%
%
\RequirePackage{fix-cm}
\documentclass[smallcondensed]{svjour3}     
\smartqed  
\usepackage{graphicx}
\usepackage{stmaryrd}
\usepackage{amsfonts}
\usepackage{amsmath,amssymb}
\usepackage{mathrsfs}
\usepackage{amstext}
\usepackage{subeqnarray}
\usepackage{txfonts,graphics,epsfig,url,cases,fancyvrb}
  \usepackage{color}
%
%
%
%
%
\newcommand{\px}[1][x]{\partial_{#1}}
\newcommand{\dx}[1][x]{\,{\rm d}#1}




\begin{document}

\title{Second-order   Stable Finite  Difference Schemes for the Time-fractional Diffusion-wave Equation
\thanks{This research is supported by National Natural Science Foundation of China (No. 11171256).}
}


\titlerunning{Second-order schemes for fractional wave equation}        

\author{Fanhai Zeng}

%
\institute{Department of Mathematics, Tongji University, Shanghai 200092, P. R. China and
Division of Applied Mathematics, Brown University, Providence RI, 02912
\\
              \email{fanhaiz@shu.edu.cn}           
}

\date{Received: date / Accepted: date}

\maketitle

\begin{abstract}
We propose two   stable and one conditionally stable finite difference
schemes of   second-order in both time and space for the time-fractional diffusion-wave equation.
In the first scheme, we apply the fractional trapezoidal rule in time and the central difference in space.
We  use the generalized Newton-Gregory formula  in time
for the  second  scheme   and its modification for the third scheme. While the second scheme is conditionally stable,
the first and the third  schemes are  stable.
We  apply  the methodology to the considered equation with also  linear advection-reaction terms
and also obtain    second-order schemes both in time and space.
Numerical examples with  comparisons among  the  proposed   schemes and the existing  ones verify the theoretical analysis and
show that the present schemes exhibit better performances than the known ones.
\keywords{Fractional diffusion-wave equation \and fractional linear multi-step method \and Fourier analysis
\and stability \and second-order in time.}
\end{abstract}

\section{Introduction}\label{intro}
In this work, we consider second-order finite difference schemes in both time and space for  the following
time-fractional diffusion-wave equation, see e.g. \cite{LuchkoMainardi2013,Mainardi95,MetzlerNonnenmacher2002,SchneiderWyss89,Vazquez2011},
\begin{equation}\label{eq:subeq}
\left\{\begin{aligned}
&{}_{C}D_{0,t}^{\beta}U(x,t)=\mu\,\px^2U(x,t)+f(x,t),
{\quad}(x,t){\,\in\,}I{\times}(0,T],I=(a,b),T>0,\\
&U(x,0)=\phi_0(x),{\quad}\px[t]U(x,0)=\psi_0(x),{\quad}x{\,\in\,}I,\\
&U(a,t)=U_a(t),{\quad}U(b,t)=U_b(t),{\quad}t{\,\in\,}(0,T],
\end{aligned}\right.
\end{equation}
where $1<\beta<2,\,\mu>0$,
and ${}_{C}D_{0,t}^{\beta}$  is
the $\beta$th-order Caputo derivative operator defined by
\begin{equation}\label{eq:cpt}
{}_{C}D_{0,t}^{\beta}U(x,t)=D^{-(2-\beta)}_{0,t}\left[{\partial_t^2}U(x,t)\right]
=\frac{1}{\Gamma(2-\beta)}\int_{0}^t(t-s)^{1-\beta}{\px[s]^2U(x,s)}\dx[s],
\end{equation}
in which $D^{-\gamma}_{0,t}$ is the fractional integral operator defined by, see for example \cite{Podlubny1999},
\begin{equation}\label{eq:fint}
D^{-\gamma}_{0,t}U(x,t)={}_{RL}D^{-\gamma}_{0,t}U(x,t)
=\frac{1}{\Gamma(\gamma)}\int_{0}^t(t-s)^{\gamma-1}{U(x,s)}\dx[s],{\quad}\gamma>0.
\end{equation}

Temporal finite difference schemes for the time-fractional diffusion-wave equation \eqref{eq:subeq} and its equivalent form
are  mostly  of first-order, ($3-\beta$)-order and second-order convergence in time.
First-order  schemes can be  based on either  $L2$ method and its generalization, see e.g.  \cite{MurilloYuste2011,MurilloYuste2013}  or
the first- and second-order  fractional backward difference methods, see e.g. \cite{HuangTang2013,SweilamKhaderAdel2012}.
The $(3-\beta)$th-order time discretization techniques are based on the L1 method   \cite{Sun06}, see also
\cite{DuCaoSun2010,LiXuLuo2013,RenSun2013,ZhangSun2012}.
Second-order schemes are  either  generalized Crank--Nicolson schemes \cite{McLeanMustapha2007}
or based on  fractional backward difference methods,
see \cite{CuestaLubich2006,DingLi2013,JinLazarovZhou2014}.
In \cite{YangHuang2014}, the $\beta$th-order method was derived based on the Crank--Nicolson scheme and the
second-order  fractional backward difference method.
There have existed other related  works on the time-fractional diffusion equations, see e.g.
\cite{BhrawyDoha2014,ChenLiu-etal2012,Hanygad2002,JafariAminataei2011,JafariMomani2007,FLiu2013,MaoXiao2014,MustaphaMcLean2013}.

In this paper, we adopt different time discretization approaches to the time-fractional diffusion-wave equation of the
form \eqref{eq:subeq}, which yields three schemes with second-order accuracy both in time and space.
The  key of our discretization is that  we use three different second-order generating functions for the time discretization of
\eqref{eq:subeq} which are different from those in all the aforementioned works.
Our first scheme for time discretization is based on the second-order fractional trapezoidal rule as that used in \cite{ZengLi2013a}.
The second and third schemes   are based on  the second-order generalized Newton-Gregory formula in time
and its modification.
With the second-order central difference method in space discretization, we can prove
that the first and the third schemes
are    stable and that  the second one is conditionally stable  through the Fourier analysis, and
all the schemes are convergent  of  order two both in time and space.

%
%
%
 %
 %

One important feature of our schemes is that
these schemes
respectively reduce to classical difference schemes
when $\beta\to2$ 
while the second-order schemes in
\cite{CuestaLubich2006,DingLi2013,JinLazarovZhou2014,McLeanMustapha2007} do  not.
{Specifically, when $\beta\to 2$, our discretization in  time for \eqref{eq:subeq} (with $f(x,t)=0$)
is reduced to $\frac{1}{\tau^2}\left({u_j^{n+1}-2u_j^n+u_j^{n-1}}\right)
=\frac{\mu}{4}\left(\delta_x^2u_j^{n+1}+2\delta_x^2u^{n}_j+\delta_x^2u^{n-1}_j\right)$
or
$\frac{1}{\tau^2}\left({u_j^{n+1}-2u_j^n+u_j^{n-1}}\right)
=\frac{\mu}{2}\left(\delta_x^2u_j^{n+1}+ \delta_x^2u^{n-1}_j\right)$.
In other words,  our schemes are extensions of classical  central difference in time.}
While in \cite{DingLi2013,JinLazarovZhou2014,YangHuang2014},
the second-order fractional backward difference method is
used to discretize the time-fractional derivative, which can not be reduced to any form of our schemes.
For example,  when $\beta\to2$ and $f(x,t)=0$,
the method in  \cite{DingLi2013,JinLazarovZhou2014}    is reduced to
{\[
 \frac{1}{\tau^2}\left({\frac{9}{4}u_j^{n+1}-6u_j^n
 +\frac{11}{2}u_j^{n-1}-2u_{j}^{n-2}+\frac{1}{4}u_j^{n-3}}\right)
 ={\mu}\delta_x^2u_j^{n+1},\quad n\geq 3.\]}
%
The second-order schemes in \cite{CuestaLubich2006,McLeanMustapha2007}  do not  reduce  to the central difference scheme   in time  when $\beta=2$.
Though  the discretization of time derivative in \cite{SweilamKhaderAdel2012} can lead to  the central difference scheme  in time for $1<\beta\leq 2$, the method in \cite{SweilamKhaderAdel2012} has only first-order accuracy in time for $1<\beta<2$.

{Spatial discretization for  \eqref{eq:subeq}}  can be finite difference methods,
see e.g.  {\cite{DuCaoSun2010,LiXuLuo2013,RenSun2013,Sun06,ZhangSun2012}}
and finite element methods,
see e.g. \cite{JinLazarovZhou2014,McLeanMustapha2007}.  Here, we consider finite difference methods while the finite
element methods can be also applied.

The remainder of this paper is outlined as follows. In Section 2, we present a fully discrete finite difference scheme
for \eqref{eq:subeq} and establish the analysis of the stability, consistency, and convergence. In Section 3,
we propose  two more fully schemes  for \eqref{eq:subeq}, one is conditionally stable and the other is  stable.
We present numerical schemes for the time-fractional diffusion-wave equation
with linear advection-reaction term  in  Section 4.
Numerical experiments are provided in Section 5 before the conclusion  in the last section.

\section{The finite difference scheme based on the fractional trapezoidal rule} \label{sec:1}
In this section, we first present the time discretization for \eqref{eq:subeq}  based on the fractional trapezoidal rule.
With  space discretization by the central
finite difference, we prove the stability,  consistency, and convergence
of the  fully  discrete scheme.

\subsection{The finite difference scheme in time}
Let $\tau$ be the time step size and $n_T$ be a positive integer
with $\tau=T/n_T$ and $t_n=n\tau$ for $n=0,1,...,n_T$. For the
function $y(t)\in C([0,T])$, denote by $y^n=y(t_n)$. Denote by $h$ as the space step size with
$h=(b-a)/N$, where $N$ is a positive integer.  The space grid point $x_j$
is defined as $x_j=a+jh,j=0,1,...,N$. For the function {$u(x,t){\,\in\,}C(C(\bar{I});[0,T])$},
we also denote by
$u^n=u^n(\cdot)=u(\cdot,t_n)$ and $u_j^n=u(x_j,t_n)$.
For simplicity, we also introduce the following notations
$$\delta_x^2u_{j}^n=\frac{u^n_{j+1}-2u^n_j+u_{j-1}^n}{h^2},{\quad}
\delta_{\hat{x}}u_{j}^n=\frac{u^n_{j+1}-u_{j-1}^n}{2h}.$$

We discretize the time   of \eqref{eq:subeq} through the fractional linear multistep methods
(FLMMs) developed  by Lubich \cite{Lubich1986}.
The $p$th-order FLMMs for  $D^{-\beta}_{0,t}u(t)$ are given by
\begin{equation} \label{lubich1}
D^{-\beta}_{0,t}u(t)\big|_{t=t_n}
=\tau^{\beta}\sum_{k=0}^{n}\omega^{(\beta)}_{n-k}u(t_k)
+\tau^{\beta}\sum_{k=0}^{s}w^{(\beta)}_{n,k}u(t_k)+O(\tau^{p}),
\end{equation}
where $\{\omega^{(\beta)}_{k}\}$ can be the coefficients of the Taylor expansions of
the following generating functions
\begin{eqnarray}
&&w^{(\beta)}(z)=\left[\sum_{j=1}^p\frac{1}{j}(1-z)^j\right]^{-\beta},
{\quad}p=1,2,...,6,\label{eq:w1}\\
&&w^{(\beta)}(z)=(1-z)^{-\beta}{\left[\gamma_0+\gamma_1(1-z)
+\gamma_2(1-z)^2+...+\gamma_{p-1}(1-z)^{p-1}\right]},\label{eq:w2}\\
&&w^{(\beta)}(z)=\left(\frac{1}{2}\frac{1+z}{1-z}\right)^{\beta},\label{eq:w3}
\end{eqnarray}
in which $\{\gamma_k\}$ in
\eqref{eq:w2} satisfy the following relation
$$\left(\frac{\ln{z}}{z-1}\right)^{-\beta}=\sum_{k=0}^{\infty}\gamma_k(1-z)^k,
{\quad}\gamma_0=1, \gamma_1=-\frac{\beta}{2}.$$
The starting weights $\{w^{(\beta)}_{n,k}\}$  are chosen such
that the asymptotic behavior of the function $u(t)$ near the
origin ($t=0$) are taken into account \cite{DiethelmFordFreedWeilbeer2006}.
One way to determine  $\{w^{(\beta)}_{n,k}\}$ for the sufficiently smooth function $u(t)$ is
given as follows \cite{Lubich1986}
\begin{equation} \label{wj}
{\sum_{k=1}^{p}}\omega^{(\beta)}_{n,k}k^q=\frac{\Gamma(q+1)}{\Gamma(q+\beta+1)}n^{q+\beta}
-\sum_{k=1}^{n}\omega^{(\beta)}_{n-k}k^q, {\quad}q=0,1,\cdots,p-1.
\end{equation}

The FLMM \eqref{lubich1} (also called the fractional trapezoidal rule) has second-order accuracy
if the generating function \eqref{eq:w3}
is used. In  this section, we will discretize the time  of
the {fractional wave} equation \eqref{eq:subeq} with
the generating function \eqref{eq:w3}.

We first consider the   following fractional ordinary differential equation (FODE)
\begin{equation}\label{eq:FODE}
{}_{C}D^{\beta}_{0,t}y(t)={\mu}y(t)+g(t),{\quad}y(0)=y_0,y'(0)=y_0'{\quad}1<\beta<2.
\end{equation}
We also assume that $y(t)$ is sufficiently smooth. Let $\hat{\varphi}(t)=y(0)+y'(0)t$. Then the above
FODE is equivalent to the following Volterra integral equation {\cite{DieFF04}}
\begin{equation}\label{eq:VIE}
y(t)-\hat{\varphi}(t)
=\mu\,D^{-\beta}_{0,t}y(t) +D^{-\beta}_{0,t} g(t)=\mu\,D^{-\beta}_{0,t}(y(t)-\hat{\varphi}(t))
+\mu\,D^{-\beta}_{0,t} \hat{\varphi}(t)+D^{-\beta}_{0,t} g(t).
\end{equation}

{The kernel $\frac{1}{\Gamma(\beta)}(t-s)^{\beta-1}$ (see also Eq. \eqref{eq:fint}) in above
Eq. \eqref{eq:VIE} has no singularity for $\beta\geq1$. There exist several
difference methods to solve \eqref{eq:VIE} and the error estimates can be proved
by the generalized Gronwall inequality for any $\beta>0$, see e.g. \cite{CaoXu13,LiZeng13,LinLiu07}.
For $\beta\geq1$, the error estimate can be also proved by the classical Gronwall inequality
due to the nonsingularity of the kernel,
see e.g. \cite{LinLiu07}. Here, we use another way to discretize \eqref{eq:VIE} that will be used
to discretize the time of \eqref{eq:subeq}.}

Before discretizing  \eqref{eq:VIE}, we introduce three lemmas.
\begin{lemma}[\cite{Lubich1986,ZengLi2013a}]\label{lm3.1}
If $y(t)=t^{\nu},\nu\geq0,\beta>0$, then
\begin{equation}\label{eq:lm2-1}
\left[D^{-\beta}_{0,t}y(t)\right]_{t=t_n}
=\tau^{\beta}\sum_{k=0}^{n}\omega_{n-k}^{(\beta)}y(t_k)+O(t_n^{\nu+\beta-p}\tau^{p})
+O(t_n^{\beta-1}\tau^{\nu+1}),
\end{equation}
where $\{\omega^{(\beta)}_{k}\}$ can be the coefficients of the Taylor series of the generating functions
defined as \eqref{eq:w1}--\eqref{eq:w3}, and $p=2$ if  \eqref{eq:w3} is used.
\end{lemma}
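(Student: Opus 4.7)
The plan is to recognize the statement as a refined form of the Lubich convolution quadrature error estimate applied to the power $y(t)=t^{\nu}$, and to split the analysis into a smooth ``bulk'' contribution and a singular ``boundary'' contribution near $t=0$.

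First I would recall the closed-form identity $D^{-\beta}_{0,t}t^{\nu}\big|_{t=t_n}=\frac{\Gamma(\nu+1)}{\Gamma(\nu+\beta+1)}t_n^{\nu+\beta}$, so the quantity to bound is exactly the discrepancy between a continuous and a discrete convolution with a power kernel. The structural ingredient is that each generating function in \eqref{eq:w1}--\eqref{eq:w3} is designed so that its symbol satisfies $w^{(\beta)}(e^{-s\tau})=\tau^{-\beta}s^{-\beta}\bigl(1+O((s\tau)^{p})\bigr)$ as $s\tau\to 0$, i.e., it approximates the symbol $s^{-\beta}$ of the Riemann--Liouville integral to order $p$ (with $p=2$ for \eqref{eq:w3}, because that choice is the fractional analogue of the trapezoidal rule). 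This is exactly the consistency property that Lubich uses to define a convolution quadrature of order $p$.

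Next I would invoke the Laplace representation $\mathcal{L}\{t^{\nu}\}(s)=\Gamma(\nu+1)s^{-\nu-1}$ and transport the symbol error through a Hankel contour integral, as in \cite{Lubich1986}. The order-$p$ consistency then yields the interior error $O(t_n^{\nu+\beta-p}\tau^{p})$; the exponent $\nu+\beta-p$ is forced by dimensional scaling, since the true value scales like $t_n^{\nu+\beta}$ and each factor of $\tau$ must be compensated by a factor of $t_n^{-1}$ emerging from the contour.

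The remaining step, and the only genuinely delicate one, is the boundary correction. Because $t^{\nu}$ is only finitely differentiable at the origin, the first few terms of the discrete convolution $\tau^{\beta}\sum_{k=0}^{n}\omega_{n-k}^{(\beta)}t_k^{\nu}$ fail to reproduce the continuous integral to full order. I would isolate this defect by replacing $t^{\nu}$ on the initial interval $[0,\tau]$ by a first-order Taylor-like expansion (which vanishes for $\nu\geq 1$), and then use the well-known asymptotics $\omega_{k}^{(\beta)}\sim k^{\beta-1}/\Gamma(\beta)$ for $k\to\infty$. The worst estimate one can make for the local defect is $O(\tau^{\nu+1})$ per affected starting node, and after convolution against weights of size $t_n^{\beta-1}/\Gamma(\beta)$ this produces precisely the second error term $O(t_n^{\beta-1}\tau^{\nu+1})$. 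The main obstacle is controlling this boundary term without losing a full order in $\tau$ for small $\nu$; the key observation that makes it work is that only finitely many starting nodes are affected, while the tail weights decay slowly enough that the prefactor is $t_n^{\beta-1}$ rather than $t_n^{\beta}$.
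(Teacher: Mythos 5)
The paper offers no proof of this lemma --- it is quoted from Lubich (1986) and Zeng et al.\ (2013) --- and your sketch reproduces essentially the argument of that cited source: order-$p$ consistency of the symbol $w^{(\beta)}(e^{-s\tau})\approx(s\tau)^{-\beta}$ transferred through a contour/generating-function representation yields the $O(t_n^{\nu+\beta-p}\tau^{p})$ term, while the singular expansion of $\sum_k k^{\nu}z^k$ (equivalently, the start-up defect near $t=0$ convolved against the weight tail $\omega_k^{(\beta)}\sim k^{\beta-1}/\Gamma(\beta)$) yields the $O(t_n^{\beta-1}\tau^{\nu+1})$ term. This is the standard and correct route; only your parenthetical claim that the boundary defect vanishes for $\nu\geq 1$ is inaccurate, but it is not load-bearing.
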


\begin{lemma}[\cite{ZengLi2013a}]\label{lm3.2}
Denote by
\begin{equation}\label{eq:FLMM2}
y_n=\sum_{k=0}^n\omega_{n-k}^{(\beta)}G_k,
\end{equation}
{where $G_k\,(k=0,1,...)$ is any number} and
$\{\omega^{(\beta)}_{k}\}$ are the coefficients of Taylor expansions of
the generating functions $w^{(\beta)}(z)$ defined by Eq. \eqref{eq:w1}, Eq. \eqref{eq:w2},
or Eq. \eqref{eq:w3}.
Then, Eq. \eqref{eq:FLMM2} is  equivalent to the following form
\begin{equation}\label{eq:FLMM}
\sum_{k=0}^n\alpha_ky_{n-k}=\sum_{k=0}^n\theta_{n-k}G_k
\end{equation}
where  $\alpha_k$ and $\theta_k$ are the coefficients of Taylor expansions of $\alpha(z)$
and $\theta(z)$, respectively, with $w^{(\beta)}(z)=\theta(z)/\alpha(z)$.
\end{lemma}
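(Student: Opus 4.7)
The plan is to prove the equivalence through generating functions (formal power series), since both sides of the claim are convolution-type recurrences. Introduce the formal power series
\[
Y(z)=\sum_{n=0}^{\infty}y_n z^n,\qquad G(z)=\sum_{n=0}^{\infty}G_n z^n,\qquad w^{(\beta)}(z)=\sum_{n=0}^{\infty}\omega_n^{(\beta)}z^n,
\]
together with $\alpha(z)=\sum_{n\geq 0}\alpha_n z^n$ and $\theta(z)=\sum_{n\geq 0}\theta_n z^n$ satisfying $w^{(\beta)}(z)=\theta(z)/\alpha(z)$ as given in the hypothesis.

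First I would observe that the defining relation $y_n=\sum_{k=0}^{n}\omega_{n-k}^{(\beta)}G_k$ is precisely the Cauchy product of the sequences $\{\omega_n^{(\beta)}\}$ and $\{G_n\}$, so at the level of generating functions it is equivalent to the single identity
\[
Y(z)=w^{(\beta)}(z)\,G(z).
\]
Next, substituting $w^{(\beta)}(z)=\theta(z)/\alpha(z)$ and clearing the denominator yields $\alpha(z)\,Y(z)=\theta(z)\,G(z)$. Expanding the two products as Cauchy products and reading off the coefficient of $z^n$ on each side gives exactly
\[
\sum_{k=0}^{n}\alpha_k y_{n-k}=\sum_{k=0}^{n}\theta_{n-k}G_k,
\]
which is \eqref{eq:FLMM}. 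Since every step is a reversible formal manipulation of power series, the converse implication follows immediately: starting from \eqref{eq:FLMM} one retrieves $\alpha(z)Y(z)=\theta(z)G(z)$, hence $Y(z)=w^{(\beta)}(z)G(z)$, hence $y_n=\sum_{k=0}^{n}\omega_{n-k}^{(\beta)}G_k$.

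There is no real obstacle here: the proof is almost a one-liner once one passes to generating functions, and the only point that merits a remark is that all identities are to be interpreted in the ring of formal power series $\mathbb{R}[[z]]$, so no convergence of the series $w^{(\beta)}(z)$, $\alpha(z)$, or $\theta(z)$ is needed and the factorization $w^{(\beta)}=\theta/\alpha$ is well defined provided $\alpha(0)=\alpha_0\neq 0$, which holds for each of the generating functions \eqref{eq:w1}--\eqref{eq:w3} (in each case $\alpha_0=1$). Thus the equivalence is established at the level of formal series and therefore coefficient-wise for every $n\geq 0$.
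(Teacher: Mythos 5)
Your proof is correct, and it is essentially the standard argument: the paper itself gives no proof of this lemma (it is imported from the cited reference), and the equivalence there rests on exactly the same observation that \eqref{eq:FLMM2} is the Cauchy product $Y(z)=w^{(\beta)}(z)G(z)$, which upon clearing the denominator in $w^{(\beta)}=\theta/\alpha$ (with $\alpha_0\neq0$ guaranteeing invertibility of $\alpha$ as a formal power series) becomes $\alpha(z)Y(z)=\theta(z)G(z)$, i.e.\ \eqref{eq:FLMM} coefficient-wise. Your remark that everything lives in the ring of formal power series, so no convergence issues arise, is the right point to flag.
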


\begin{lemma}[\cite{ZengLi2014}]\label{lm3.3}
Suppose that $\beta>0$. Let $\{\alpha_k\}$ be the coefficients of Taylor expansions of the generating function
$\alpha(z)=(1-z)^{\beta}$, i.e., $\alpha_k=(-1)^k\binom{\beta}{k}$. Then
$$\sum_{k=1}^n\alpha_{n-k}k^{\gamma-1}=O(n^{\gamma-1-\beta}) + O(n^{-\beta-1}),
{\quad}\gamma\in \mathbb{R},{\quad}\gamma\neq 0,-1,-2,\cdots.$$
\end{lemma}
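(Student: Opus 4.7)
The plan is to identify $S_n := \sum_{k=1}^{n} \alpha_{n-k}\,k^{\gamma-1}$ as the $n$-th Taylor coefficient of a product of generating functions and then to perform a singularity analysis at $z=1$. Since $A(z) := \sum_{k\ge 0}\alpha_k z^k = (1-z)^{\beta}$ by definition, setting $B(z) := \sum_{k=1}^{\infty}k^{\gamma-1}z^k$ the Cauchy product formula immediately gives
$$S_n \;=\; [z^n]\bigl(A(z)B(z)\bigr),$$
so the full asymptotic behaviour of $S_n$ is encoded in the behaviour of $AB$ near its dominant singularity at $z=1$.

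The analytic input is the classical Lindel\"of / Hurwitz-zeta expansion of $B$. For $\gamma\notin\{0,-1,-2,\dots\}$ it asserts
$$B(z) \;=\; \Gamma(\gamma)(1-z)^{-\gamma} + \phi(z),$$
where $\phi$ is analytic on a slit neighbourhood of $z=1$ and, in particular, its Taylor coefficients at the origin decay faster than any negative power of $k$. Substitution gives
$$A(z)B(z) \;=\; \Gamma(\gamma)(1-z)^{\beta-\gamma} + (1-z)^{\beta}\phi(z),$$
and the conclusion follows term-by-term. For the first summand, the explicit evaluation $[z^n](1-z)^{\beta-\gamma} = \Gamma(n+\gamma-\beta)/[\Gamma(\gamma-\beta)\Gamma(n+1)]$ together with Stirling yields the asymptotic $\sim n^{\gamma-\beta-1}/\Gamma(\gamma-\beta)$, i.e.\ the advertised $O(n^{\gamma-1-\beta})$ contribution. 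For the second summand, combining the well-known asymptotic $\alpha_k = O(k^{-\beta-1})$ with the rapid decay of the Taylor coefficients of $\phi$, a Cauchy-product bound (or, more uniformly, the Flajolet--Odlyzko transfer theorem) delivers $[z^n]\bigl((1-z)^{\beta}\phi(z)\bigr) = O(n^{-\beta-1})$, which is the second stated term.

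The main technical obstacle is the uniform control of $B$ in a slit neighbourhood of $z=1$ demanded by the transfer step. For $\gamma\in(0,1)$ this is immediate from the representation $\Gamma(\gamma)(1-z)^{-\gamma} = \int_{0}^{\infty}s^{\gamma-1}e^{-s}(1-e^{-s}z)^{-1}\,ds$ combined with Abel summation; for general admissible $\gamma$ a Hankel-contour deformation in the Mellin--Barnes representation of $k^{\gamma-1}$ is required, and the exclusion $\gamma\notin\{0,-1,-2,\dots\}$ surfaces here as the locus where $\Gamma(\gamma)$ or the factor $(1-z)^{-\gamma}$ degenerate and must be replaced by logarithmic corrections. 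A fully real-variable alternative, which avoids complex analysis altogether, is to split the convolution at $k=\lfloor n/2\rfloor$: on the lower half one applies $\alpha_{n-k}\sim(n-k)^{-\beta-1}/\Gamma(-\beta)\le C\,n^{-\beta-1}$ uniformly and sums $k^{\gamma-1}$ directly, producing the $O(n^{-\beta-1})$ term; on the upper half one uses $k^{\gamma-1}\sim n^{\gamma-1}$ and Abel summation on the $\alpha$ factor together with the partial-sum bound $\sum_{j\le n/2}\alpha_j = O(n^{-\beta})$, producing the $O(n^{\gamma-1-\beta})$ term.
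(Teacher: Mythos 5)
The paper offers no proof of this lemma --- it is imported verbatim from \cite{ZengLi2014} --- so your argument has to stand on its own. The framework you choose (writing $S_n=[z^n]\bigl(A(z)B(z)\bigr)$ and expanding $B$ at its singularity $z=1$) is the right one, and the first term $O(n^{\gamma-1-\beta})$ is handled correctly. The gap is in the second term. The decomposition $B(z)=\Gamma(\gamma)(1-z)^{-\gamma}+\phi(z)$ with ``$\phi$ analytic whose Taylor coefficients decay faster than any negative power of $k$'' is false. The classical expansion of $\sum_{k\ge1}k^{\gamma-1}z^k$ is in powers of $w=-\ln z$, namely $\Gamma(\gamma)w^{-\gamma}$ plus a function analytic in $w$; converting $w^{-\gamma}$ into powers of $(1-z)$ leaves in your $\phi$ a residual singularity of type $(1-z)^{1-\gamma}$. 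Concretely, for $\gamma=\tfrac12$ one finds $\phi_k\sim\tfrac18 k^{-3/2}$, and for $\gamma=2$ one has exactly $\phi(z)=-(1-z)^{-1}$, whose coefficients are all $-1$. Consequently the Cauchy-product bound you invoke ($\alpha_k=O(k^{-\beta-1})$ against ``rapidly decaying'' $\phi_k$) does not deliver $O(n^{-\beta-1})$: an absolute-value convolution against $\phi_k\sim ck^{\gamma-2}$ produces an $O(n^{\gamma-2})$ term, which for $\beta>1$ (precisely the regime used in this paper) exceeds the claimed $O(n^{\gamma-1-\beta})$. The repair is to stay at the level of singular expansions: write $A(z)B(z)=\Gamma(\gamma)(1-z)^{\beta-\gamma}g(z)+(1-z)^{\beta}\tilde\phi(z)$ with $g,\tilde\phi$ analytic at $z=1$, $g(1)=1$, and apply the Flajolet--Odlyzko transfer to the second piece, which gives $\tilde\phi(1)\,[z^n](1-z)^{\beta}+O(n^{\gamma-\beta-2})=O(n^{-\beta-1})+O(n^{\gamma-\beta-2})$, both admissible.

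The same issue undermines the real-variable alternative: on the upper half, a single Abel summation with $\sum_{j\le m}\alpha_j=O(m^{-\beta})$ and increments of $(n-j)^{\gamma-1}$ of size $O(n^{\gamma-2})$ leaves an error $O(n^{\gamma-2})\sum_{j\le n/2}(j+1)^{-\beta}$, which for $\beta>1$ is $O(n^{\gamma-2})$ and is not contained in $O(n^{\gamma-1-\beta})+O(n^{-\beta-1})$ (take $\beta=1.5$, $\gamma=5$). One must sum by parts $\lceil\beta\rceil$ times, using that the $m$-fold partial sums of $\{\alpha_k\}$ are the coefficients of $(1-z)^{\beta-m}$ and hence of size $O(k^{m-1-\beta})$. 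The lemma itself is true and your architecture is sound, but as written both justifications offered for the $O(n^{-\beta-1})$ term fail.
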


Now, we are in a position to discretize \eqref{eq:VIE}.
If $y(t)$ is smooth enough, then we have
$y(t)-\hat{\varphi}(t)=\frac{1}{2}y''(0)t^2+D_{0,t}^{-3}y'''(t)$. Therefore,
by Lemma \ref{lm3.1}, we can have the following
discretization   for $\left[D^{-\beta}_{0,t}\left(y(t)-\hat{\varphi}(t)\right)\right]_{t=t_n}$ as
\begin{equation}\label{eq:VIE4}
\left[D^{-\beta}_{0,t}\left(y(t)-\hat{\varphi}(t)\right)\right]_{t=t_n}
=\tau^{\beta}\sum_{k=0}^n\omega_{n-k}^{(\beta)}(y(t_k)-\hat{\varphi}(t_k))- \widetilde{R}^n,
\end{equation}
where $\{\omega_{k}^{(\beta)}\}$  are the coefficients of Taylor expansions of
the  generating function \eqref{eq:w3}, and the truncation error  $\widetilde{R}^n$ satisfies
$\widetilde{R}^n=O(t_n^{\beta}\tau^2)+ O(t_n^{\beta-1}\tau^3) \textcolor[rgb]{1.00,0.00,0.00}{+}O(\tau^{2+\beta})
=O(n^{\beta}\tau^{2+\beta})+O(n^{\beta-1}\tau^{2+\beta})+O(\tau^{2+\beta})$,
where Lemma   \ref{lm3.1}  {with $\nu=p\,(p\geq 2)$
is used to obtain $\widetilde{R}^n$.}

Hence, Eq. \eqref{eq:VIE} has the following discretization
\begin{equation}\label{eq:VIE5}
y^n-\hat{\varphi}^n
={\mu}\tau^{\beta}\sum_{k=0}^n\omega_{n-k}^{(\beta)}(y^k-\hat{\varphi}^k)
+\mu\,\left[D^{-\beta}_{0,t}\hat{\varphi}(t)\right]_{t=t_n}+ \left[D^{-\beta}_{0,t}g(t)\right]_{t=t_n} +{\widetilde{R^n}}.
\end{equation}
Applying  Lemma \ref{lm3.2} yields the equivalent form of \eqref{eq:VIE5} as
\begin{equation}\label{eq:VIE6}\begin{aligned}
\sum_{k=0}^n\alpha_{n-k}\left(y^k-\hat{\varphi}^k\right)
=&{\mu\tau^{\beta}}\sum_{k=0}^n\theta_{n-k}\left(y^k-\hat{\varphi}^k\right)
+\sum_{k=0}^n{\alpha_{n-k}}\left\{\mu\,\left[D^{-\beta}_{0,t}\hat{\varphi}(t)\right]_{t=t_k}
+ \left[D^{-\beta}_{0,t}g(t)\right]_{t=t_k} +{\widetilde{R^k}}\right\},
\end{aligned}\end{equation}
where  $\alpha(z)$ and $\theta(z)$ in Lemma \ref{lm3.2}
can be chosen as  $\alpha(z)=(1-z)^{\beta}=\sum_{k=0}^{\infty}\alpha_kz^k
=\sum_{k=0}^{\infty}(-1)^k\binom{\beta}{k}z^k$ and
$\theta(z)=\frac{(1+z)^{\beta}}{2^{\beta}}=\sum_{k=0}^{\infty}\theta_kz^k
=\frac{1}{2^{\beta}}\sum_{k=0}^{\infty}\binom{\beta}{k}z^k$. One can also find that
$\theta_k=2^{-\beta}(-1)^k\alpha_k$.

Rewriting \eqref{eq:VIE6} into the following form
\begin{equation}\label{eq:VIE7}\begin{aligned}
\sum_{k=0}^n\alpha_{n-k}(y^{k}-\hat{\varphi}^k)
=&{\mu\tau^{\beta}}\sum_{k=0}^n\theta_{n-k}\left(y^k-\hat{\varphi}^k\right)
+\mu\,\sum_{k=0}^n\alpha_{k}\hat{\Phi}^k + \sum_{k=0}^n{\alpha_{n-k}}G^k +R^n,
\end{aligned}\end{equation}
where $G^k=\left[D^{-\beta}_{0,t}g(t)\right]_{t=t_k}$,
$\hat{\Phi}^k=\left[D^{-\beta}_{0,t}\hat{\varphi}(t)\right]_{t=t_k}=\frac{y_0t_k^{\beta}}{\Gamma(\beta+1)}
+\frac{y'_0t_k^{\beta+1}}{\Gamma(\beta+2)}$,
and $R^n=\sum_{k=1}^n\alpha_{n-k}{\widetilde{R^k}}$.

Next, we analyse the truncation error
$R^n=\sum_{k=0}^n{\alpha_{n-k}}\widetilde{R^k}$
defined in \eqref{eq:VIE7} when the
generating function \eqref{eq:w3}  is used.
We can obtain a bound of    $R^n$ in \eqref{eq:VIE7} as follows
\begin{equation}\label{eq:e1}
R^n=\sum_{k=0}^n\alpha_{n-k}{\widetilde{R^k}}
 =\sum_{k=0}^n\alpha_{n-k}\left(O(n^{\beta}\tau^{2+\beta})+O(n^{\beta-1}\tau^{2+\beta})+O(\tau^{2+\beta})\right)
 =O(\tau^{2+\beta}),
\end{equation}
where we have used  Lemma \ref{lm3.3}.

Assume  that $U(x,t)$ is sufficiently smooth in time. From  \eqref{eq:VIE7},
we can obtain the time discretization of the
wave equation \eqref{eq:subeq}   as follows.

\begin{itemize}
\item \textbf{Time discretization I:} Applying the time discretization \eqref{eq:VIE7} with  the generating
function \eqref{eq:w3} to Eq. \eqref{eq:subeq} yields
\begin{equation}\label{eq:t1}\begin{aligned}
\sum_{k=0}^n\alpha_{n-k}(U^{k}-\varphi^k)
=&{\mu\tau^{\beta}}\sum_{k=0}^n\theta_{n-k}\left(\px^2 U^k-\px^2\varphi^k\right)
+\mu\,\sum_{k=0}^n\alpha_{n-k}\px[x]^2\Phi^k + \sum_{k=0}^n\alpha_{n-k}F^k+R^n,
\end{aligned}\end{equation}
where $\alpha_k=(-1)^k\binom{\beta}{k},\theta_k=2^{-\beta}(-1)^k\alpha_k$,
$\varphi(x,t)=U(x,0)+{t\px[t]U(x,0)}=\phi_0(x)+\psi_0(x)t$,
$\Phi^k=\left[D^{-\beta}_{0,t}\varphi(x,t)\right]_{t=t_k}=\frac{\phi_0(x)t_k^{\beta}}{\Gamma(\beta+1)}
+\frac{\psi_0(x)t_k^{\beta+1}}{\Gamma(\beta+2)}$,
 $F^k=\left[D^{-\beta}_{0,t}f(x,t)\right]_{t=t_k}$,  and $R^n$ is the discretization error in time
satisfying $|R^n|{\,\leq\,}C\tau^{2+\beta}$.
\end{itemize}

Next, we present the fully discrete  approximation for  equation \eqref{eq:subeq}.
From the time discretization  \eqref{eq:t1}   with  the second-order central difference
discretization of the space derivative operator, we  present the
corresponding fully discrete approximations for \eqref{eq:subeq} as follows.
\begin{itemize}
\item \textbf{Scheme I:} Find $u_{j}^n$ for $j=1,2,...,N-1,n=1,2,...,n_T$, such that
\begin{equation}\label{fem1}
\left\{\begin{aligned}
&\sum_{k=0}^n\alpha_{n-k}(u_j^{k}-\varphi_j^k)
={\mu\tau^{\beta}}\sum_{k=0}^n\theta_{n-k}\left(\delta^2_xu_j^k-\delta^2_x\varphi_j^k\right)
+\mu\,\sum_{k=0}^n\alpha_{n-k}\delta_x^2\Phi^k_j + \sum_{k=0}^n\alpha_{n-k}F^k_j,\\
&u_0^k=U_a(t_k),{\quad}u_N^k=U_b(t_k),{\qquad}k=0,1,...,n_T.\\
&u_j^0=\phi_0(x_j),
\end{aligned}\right.
\end{equation}
where $\alpha_k=(-1)^k\binom{\beta}{k},\theta_k=2^{-\beta}(-1)^k\alpha_k$,
$\Phi^k_j=\left[D^{-\beta}_{0,t}\varphi(x_j,t)\right]_{t=t_k}=
\frac{\phi_0(x_j)t_k^{\beta}}{\Gamma(\beta+1)}
+\frac{\psi_0(x_j)t_k^{\beta+1}}{\Gamma(\beta+2)}$,
$\varphi_j^k=\phi_0(x_j)+\psi_0(x_j)t_k$,
and $F_j^k=\left[D^{-\beta}_{0,t}f(x_j,t)\right]_{t=t_k}$.
\end{itemize}

\begin{remark}
If $\beta \to 2$, then the  scheme \eqref{fem1}  reduces to the unconditionally stable
central difference scheme of second-order accuracy both in time and space, i.e.,
 \begin{equation*}\label{fdm2-2}
 \begin{aligned}
 &\frac{u_j^{n+1}-2u_j^n+u_j^{n-1}}{\tau^2}
 =\frac{\mu}{4}\left(\delta_x^2u_j^{n+1}+{2\delta_x^2u^{n}_j}+\delta_x^2u^{n-1}_j\right)
 +(F_j^{n+1}-2F_j^n+F^{n-1}_j),{\quad}n\geq 1.
 \end{aligned}
 \end{equation*}
\end{remark}
\textbf{Calculation of $F^n$:} In \eqref{fem1}, we do not illustrate how to
calculate $\left[D^{-\beta}_{0,t}f(x,t)\right]_{t=t_k}$ in $F^n$.
If $U(x,t)$ is sufficiently smooth in time, then $f(x,t)-f(x,0)$ has the form
$f(x,t)-f(x,0)=t^{2-\beta}f_1(x,t)+tf_2(x,t)$, where $f_1(x,t)$ and $f_2(x,t)$ are sufficiently smooth in time.
Hence, we can use the following second-order formula to approximate
$\left[D^{-\beta}_{0,t}f(x,t)\right]_{t=t_n}$
\begin{equation} \label{CalFn}\begin{aligned}
\left[D^{-\beta}_{0,t}f(x,t)\right]_{t=t_n}
=&\left[D^{-\beta}_{0,t}\left(f(x,t)-f(x,0)\right)\right]_{t=t_n} + \frac{t_n^{\beta}}{\Gamma(1+\beta)}f(x,0)\\
=&\tau^{\beta}\sum_{k=0}^{n}\omega^{(\beta)}_{n-k}(f(x,t_k)-f(x,0))
+\tau^{\beta}w^{(\beta)}_{n,1}(f(x,t_1)-f(x,0))\\
&{+\tau^{\beta}w^{(\beta)}_{n,2}(f(x,t_2)-f(x,0))+ \frac{t_n^{\beta}}{\Gamma(1+\beta)}f(x,0)+O(\tau^2),}
\end{aligned}\end{equation}
where $\{\omega^{(\beta)}_{k}\}$ are the coefficients of the Taylor expansions of
the  generating  function \eqref{eq:w2}.
The coefficients $\{w^{(\beta)}_{n,1}\}$ and $\{w^{(\beta)}_{n,2}\}$ are chosen such that
\eqref{CalFn} is exact for ${f(x,t)-f(x,0)=t^{2-\beta},t}$. Hence, one has
\begin{equation} \label{Calwj}
{w^{(\beta)}_{n,1}+2^q w^{(\beta)}_{n,2}=\frac{\Gamma(q+1)}{\Gamma(q+\beta+1)}n^{q+\beta}
-\sum_{k=1}^{n}\omega^{(\beta)}_{n-k}k^q,{\quad}q\in\{2-\beta,1\}.}
\end{equation}

\subsection{Stability, consistency,  and convergence}
This subsection mainly focuses on the stability, consistency, and convergence of  the  scheme \eqref{fem1}.
We first rewrite the scheme \eqref{fem1} into the following form
\begin{equation}
\begin{aligned}
&\sum_{k=0}^n\alpha_{n-k}(u_j^{k}-\varphi_j^k)
={\mu\tau^{\beta}}\sum_{k=0}^n\theta_{n-k}\delta^2_xu_j^k + \mu\tau^{\beta} H^n_j+ \sum_{k=0}^n\alpha_{n-k}F^k_j,
\end{aligned}
\end{equation}
where
\begin{equation}\label{hj}
\begin{aligned}
H_j^n=&-\sum_{k=0}^n\theta_{n-k}\delta^2_x\varphi_j^k
+{\tau^{-\beta}}\sum_{k=0}^n\alpha_{n-k}\delta_x^2\Phi^k_j
=\delta^2_x\phi_0(x_j)A_n+\tau\delta^2_x\psi_0(x_j)B_n,
\end{aligned}
\end{equation}
in which
\begin{equation}\label{ABn}
\begin{aligned}
&A_n=\frac{1}{\Gamma(\beta+1)}\sum_{k=0}^n\alpha_{n-k}k^{\beta}-\sum_{k=0}^n\theta_{n-k}\\
&B_n=\frac{1}{\Gamma(\beta+2)}\sum_{k=0}^n\alpha_{n-k}k^{\beta+1}-\sum_{k=0}^n\theta_{n-k}k.
\end{aligned}
\end{equation}

Consider \eqref{eq:lm2-1} with  $y(t)=t^{\nu},\nu\geq 0$, one has
\begin{equation}\label{fem1-0}
\frac{\Gamma(\nu+1)t_n^{\nu+\beta}}{\Gamma(\beta+\nu+1)}=\left[D^{-\beta}_{0,t}t^{\nu}\right]_{t=t_n}
=\tau^{\beta}\sum_{k=0}^n\omega_{n-k}^{(\beta)}(t_k)^{\nu} + O(t_{n}^{\nu+\beta-2}\tau^2)+O(t_{n}^{\beta-1}\tau^{\nu+1}).
\end{equation}
The above equation implies
\begin{equation}\label{fem1-0-1}
\frac{\Gamma(\nu+1)n^{\nu+\beta}}{\Gamma(\beta+\nu+1)}
=\sum_{k=0}^n\omega_{n-k}^{(\beta)}k^{\nu} + O({n}^{\nu+\beta-2})+O({n}^{\beta-1}).
\end{equation}
Applying Lemma  \ref{lm3.2} yields
\begin{equation}\label{fem1-0-2}
\frac{\Gamma(\nu+1)}{\Gamma(\beta+\nu+1)}\sum_{k=0}^n\alpha_{n-k} k^{\nu+\beta}
-\sum_{k=0}^n\theta_{n-k}k^{\nu} = \sum_{k=0}^n\alpha_{n-k}\left(O({k}^{\nu+\beta-2})+O({k}^{\beta-1})\right).
\end{equation}
By Lemma \ref{lm3.3} and  \eqref{fem1-0-2} with $\nu=0,1$, one has
\begin{equation}\label{fem1-0-3}
A_n = O({n}^{-1}),{\quad} B_n=O({n}^{-1}).
\end{equation}

Next, we prove the stability of the scheme \eqref{fem1}  through Fourier analysis.
\begin{theorem}\label{substblth}
The finite difference scheme \eqref{fem1} is   stable.
\end{theorem}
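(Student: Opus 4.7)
My plan is to run a standard Fourier (von Neumann) stability analysis. First I would form the error $\varepsilon_j^n=u_j^n-\tilde u_j^n$ between two solutions of scheme~\eqref{fem1} with perturbed initial, boundary, and source data; by linearity $\varepsilon_j^n$ satisfies the same convolution equation driven by a perturbation $\widetilde G_j^n$ of the data. Since the stencil has spatially constant coefficients, I expand $\varepsilon_j^n=\sum_\xi \hat\varepsilon^n(\xi)\,e^{\mathrm{i}\xi x_j}$ in a discrete spatial Fourier series and use $\delta_x^2 e^{\mathrm{i}\xi x_j}=-\tfrac{4}{h^2}\sin^2(\xi h/2)\,e^{\mathrm{i}\xi x_j}$. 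This decouples the modes and reduces the problem to bounding, uniformly in
\[
\lambda=\lambda(\xi):=\tfrac{4\mu\tau^{\beta}}{h^{2}}\sin^{2}(\xi h/2)\ge 0,
\]
the scalar convolution equation
\[
\sum_{k=0}^{n}(\alpha_{n-k}+\lambda\theta_{n-k})\,\hat\varepsilon^{k}(\xi)=\hat{\widetilde G}^{n}(\xi).
\]

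The second step is to pass to generating functions, using the framework of Lemma~\ref{lm3.2}. With $\alpha(z)=(1-z)^{\beta}$ and $\theta(z)=\bigl((1+z)/2\bigr)^{\beta}$, the symbol of the scheme factorises as
\[
P_\lambda(z):=\alpha(z)+\lambda\theta(z)=(1-z)^{\beta}\bigl[1+\lambda\,w^{(\beta)}(z)\bigr],
\]
with $w^{(\beta)}$ the generating function~\eqref{eq:w3}. The M\"obius map $w=2(1-z)/(1+z)$ sends the open unit disc onto the open right half-plane, so for $|z|<1$ one has $\arg w\in(-\pi/2,\pi/2)$, and hence $\arg w^{\beta}\in(-\beta\pi/2,\beta\pi/2)$, which is \emph{strictly} contained in $(-\pi,\pi)$ since $\beta<2$. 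In particular $w^{\beta}$ never takes the value $-\lambda\le 0$ on $|z|<1$, so $P_\lambda$ has no zeros there. A separate check on $|z|=1$, using $\tfrac{1+z}{2(1-z)}=\tfrac{\mathrm{i}}{2}\cot(\varphi/2)$ at $z=e^{\mathrm{i}\varphi}$ and the explicit values of $P_\lambda$ at $z=\pm 1$, confirms non-vanishing on the closed disc except at $z=1$ when $\lambda=0$; that exceptional case corresponds to the constant spatial Fourier mode and can be handled directly from the time recursion.

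The third step is to translate this symbol information into a bound on the Taylor coefficients $c^{(\lambda)}_{k}$ of $1/P_\lambda(z)$, so that representing $\hat\varepsilon^{n}=\sum_{k=0}^{n}c^{(\lambda)}_{n-k}\hat{\widetilde G}^{k}$ gives $|\hat\varepsilon^{n}(\xi)|\le C(T)\max_{k\le n}|\hat{\widetilde G}^{k}(\xi)|$ uniformly in $\xi$. The required decay of convolution weights comes from Lemma~\ref{lm3.3} applied to $\alpha(z)$ and $\theta(z)$, along the same lines that produced the estimates $A_n, B_n = O(n^{-1})$ in~\eqref{fem1-0-3}. Summing over Fourier modes and invoking Parseval then yields the desired discrete $\ell^{2}$ stability bound.

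The main obstacle, in my view, is this last uniform-in-$\lambda$ control of the inverse-symbol coefficients. For $\beta\in(1,2)$ the weights $\alpha_k=(-1)^k\binom{\beta}{k}$ change sign (in contrast with the subdiffusion range $\beta\in(0,1)$ treated in~\cite{ZengLi2013a}), so no direct discrete-energy or positivity argument is available: the bound must be extracted from the complex-analytic non-vanishing of $P_\lambda$ together with a careful analysis of its behaviour near the boundary point $z=1$, where $\alpha$ itself vanishes and where the high-frequency regime $\lambda\to\infty$ competes with the low-frequency regime $\lambda\to 0$. Keeping the constants $C(T)$ independent of $\lambda$ is precisely where the asymptotic tools embodied in Lemma~\ref{lm3.3} and \eqref{fem1-0-3} will be decisive.
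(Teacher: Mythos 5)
Your proposal follows essentially the same route as the paper: a Fourier-mode ansatz reducing the scheme to a scalar convolution equation, passage to generating functions, non-vanishing of the symbol $\alpha(z)-S^*\theta(z)$ on the closed disc because $\alpha(z)/\theta(z)=\bigl(2(1-z)/(1+z)\bigr)^{\beta}$ avoids the negative real axis for $\beta<2$, and then Lubich's stability-region theory to bound the inverse-symbol coefficients. The paper simply cites Lubich (1986) for the $\ell^1$ bound on $1/(\alpha(z)-S^*\theta(z))$ and checks the range of $\alpha/\theta$ only for real $z\in[-1,1]$, whereas you carry out the M\"obius-map argument on the whole disc explicitly; the substance is the same.
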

\begin{proof}
Suppose that $u_j^n=\rho^n\exp(ij\sigma h),i^2=-1$, $\psi_0(x_j)=d_0\exp(ij\sigma h)$, and $F_j^k=0$. Then we have
\begin{equation*}\label{fem1-1}\begin{aligned}
&\exp(ij\sigma h)\sum_{k=0}^n\alpha_{n-k}(\rho^k-\rho^0-d_0t_k)\\
=&\frac{\mu\tau^{\beta}}{h^2}\left[\sum_{k=0}^n\theta_{n-k}\rho^k+A_n\rho^0+B_nd_0\tau\right]
\Big(\exp(i(j+1)\sigma h)-2\exp(ij\sigma h)+\exp(i(j-1)\sigma h)\Big)\\
=&\exp(ij\sigma h)\left(-\frac{\mu\tau^{\beta}}{h^2}4\sin^2\left(\frac{\sigma h}{2}\right)\right)
\left[\sum_{k=0}^n\theta_{n-k}\rho^k+A_n\rho^0+B_nd_0\tau\right],
\end{aligned}\end{equation*}
where $\alpha_k=(-1)^k\binom{\beta}{k}$ and $\theta_k=2^{-\beta}(-1)^k\alpha_k$. Eliminating $\exp(ij\sigma h)$
from the above equation leads to
\begin{equation}\label{fem1-2}\begin{aligned}
\sum_{k=0}^n\alpha_{n-k}(\rho^k-\rho^0-d_0t_k)
=&S^*\left[\sum_{k=0}^n\theta_{n-k}\rho^k+A_n\rho^0+B_nd_0\tau\right],
\end{aligned}\end{equation}
where
$$S^*=-\frac{4\mu\tau^{\beta}}{h^2}\sin^2\left(\frac{\sigma h}{2}\right).$$
Now, we need to investigate the stability of the difference equation  \eqref{fem1-2}.
Let
$$A(z)=\sum_{k=0}^{\infty}A_kz^{k},{\quad}B(z)=\sum_{k=0}^{\infty}B_kz^{k},{\quad}
 \rho(z)=\sum_{k=0}^{\infty}\rho^kz^{k},\quad |z|\leq 1.$$
From \eqref{fem1-2}, one can obtain
\begin{equation*}\label{fem1-3}\begin{aligned}
\sum_{n=0}^{\infty}\left[\sum_{k=0}^n\alpha_{n-k}(\rho^k-\rho^0-d_0\tau k)\right]z^n
=&S^*\sum_{n=0}^{\infty}\left[A_n\rho^0+B_nd_0\tau+\sum_{k=0}^n\theta_{n-k}\rho^k\right]z^n,
\end{aligned}\end{equation*}
which implies
\begin{equation*}\label{fem1-4}\begin{aligned}
\alpha(z)\left(\rho(z)-\frac{\rho^0}{1-z}-d_0\tau K(z)\right)
=&S^*\left(\theta(z)\rho(z)+A(z)\rho^0+B(z)d_0\tau\right),
\end{aligned}\end{equation*}
where
\begin{equation}\label{fem1-kz}
K(z)=\sum_{k=0}^{\infty}kz^k=\frac{z}{(1-z)^2}.
\end{equation}
Hence,
\begin{equation}\label{fem1-5}\begin{aligned}
\rho(z)=\frac{\alpha(z)\left((1-z)^{-1}\rho^0+K(z)d_0\tau\right)-S^*\left(A(z)\rho^0+B(z)d_0\tau\right)}{\alpha(z)-S^*\theta(z)}.
\end{aligned}\end{equation}

Denote by $\alpha(z)\left((1-z)^{-1}\rho^0+d_0\tau K(z)\right)-S^*\left(A(z)\rho^0+B(z)d\tau\right)=\sum_{k=0}^{\infty}g_kz^k$.
From \eqref{fem1-0-3}, \eqref{fem1-kz}, \eqref{fem1-5}, and $\alpha(z)=(1-z)^{\beta}$, we can derive
that $g_n\to 0$ as $n\to \infty$ for any given  $S^*$ (see the first part in the proof of Lemma 3.5 in \cite{Lubich1986}
and Eq. (2.8) in \cite{Lubich1986b}),
which implies  $\rho^n\to 0$  as $n\to \infty$ for any given
$S^*\neq \frac{\alpha(z)}{\theta(z)},|z|\leq 1$ (see also Eq. (2.8) in \cite{Lubich1986b}). If there exists a number
$\sigma$ such that $\sin^2\left(\frac{\sigma h}{2}\right)=0$, i.e., $S^*=0$, then we can also
obtain $\rho^k=\rho^0+d t_k$ from \eqref{fem1-2}. Like  Theorem 2.1 in \cite{Lubich1986b},
we can obtain  the following region
$$\mathbb{S}=\mathbb{C}\setminus \left\{\frac{\alpha(z)}{\theta(z)}:|z|\leq 1\right\}$$
such that $\rho^n\,(0 \leq n\leq n_T)$ is bounded for any $\tau,h$ and given $T$.
That is to say, for any $S^*\in \mathbb{S}$, the difference equation \eqref{fem1-2} is stable.
Since $\alpha(z)=(1-z)^{\beta}\geq 0$ and $\theta(z)=2^{-\beta}(1+z)^{\beta} \geq0$ for all $z\in [-1,1]$,
the value of $\frac{\alpha(z)}{\theta(z)}$
is always nonnegative.
Therefore, the stability region $\mathbb{S}$ contains the whole of the left-half plane
(Of course, it contains the negative semi axis),
which implies that the difference relation \eqref{fem1-2} is stable for every $S^*<0$, i.e., $\rho^n$ is bounded as $n\to \infty$.
Hence, the scheme \eqref{fem1} is  stable for any $\tau^{\beta}/h^2$, which completes  the proof.
\end{proof}

Next, we investigate the consistency of the scheme \eqref{fem1}.
Letting $x=x_j$ in \eqref{eq:t1} and applying the central difference method to the space derivative, we can derive
\begin{equation}\label{consitancy-1}\begin{aligned}
\sum_{k=0}^n\alpha_{n-k}(U_j^{k}-\varphi_j^k)
=&{\mu\tau^{\beta}}\sum_{k=0}^n\theta_{n-k}\left(\delta_x^2 U_j^k-\delta_x^2 \varphi^k_j\right)
+\mu\,\sum_{k=0}^n\alpha_{n-k}\delta_x^2 \Phi_j^k + \sum_{k=0}^n\alpha_{n-k}F_j^k+R_j^n,
\end{aligned}\end{equation}
where $R_j^n=O(\tau^{\beta}(\tau^2+h^2))$.  In order to prove that the scheme \eqref{fem1} is consistent of order $O(\tau^2+h^2)$,
we just need to prove the following result
\begin{equation}\label{consitancy-2}\begin{aligned}
\lim_{\tau \to0,h\to 0}\Psi(\tau,h)={}_{C}D_{0,t}^{\beta}U(x_j,t)-\mu\,\px^2U(x_j,t)-f(x_j,t)=0, {\quad}n\tau=t,
\end{aligned}\end{equation}
where
\begin{equation}\label{consitancy-3}\begin{aligned}
\Psi(\tau,h) =& \frac{1}{\tau^{\beta}}\Bigg\{\sum_{k=0}^n\alpha_{n-k}(U_j^{k}-\varphi_j^k)
-\bigg[{\mu\tau^{\beta}}\sum_{k=0}^n\theta_{n-k}\left(\delta_x^2 U_j^k-\delta_x^2 \varphi^k_j\right)\\
&+\mu\,\sum_{k=0}^n\alpha_{n-k}\delta_x^2 \Phi_j^k + \sum_{k=0}^n\alpha_{n-k}F_j^k+R_j^n\bigg]\Bigg\}, {\quad}n\tau=t.
\end{aligned}\end{equation}
According to the Gr\"{u}nwald--Letnikov formula \cite{Podlubny1999}, we have
\begin{equation}\label{consitancy-4}\begin{aligned}
&\lim_{\tau \to0,h\to 0} \frac{1}{\tau^{\beta}}\sum_{k=0}^n\alpha_{n-k}(U_j^{k}-\varphi_j^k)
={}_{RL}D_{0,t}^{\beta}\left(U(x_j,t)-\varphi(x_j,t)\right)={}_{C}D_{0,t}^{\beta}U(x_j,t),\\
&\lim_{\tau \to0,h\to 0} \frac{1}{\tau^{\beta}}\sum_{k=0}^n\alpha_{n-k}F_j^k={}_{RL}D_{0,t}^{\beta}D_{0,t}^{-\beta}f(x_j,t)=f(x_j,t),
\end{aligned}\end{equation}
{where ${}_{RL}D_{0,t}^{\beta}$ is the Riemann--Liouville
fractional derivative operator, see e.g. \cite{Podlubny1999}.}

From \eqref{hj}, \eqref{ABn}, and \eqref{fem1-0-3}, we derive
\begin{equation}\label{consitancy-5}\begin{aligned}
&\lim_{\tau \to0,h\to 0}\frac{1}{\tau^{\beta}}\Bigg\{
{\tau^{\beta}}\sum_{k=0}^n\theta_{n-k}\delta_x^2 \varphi^k_j
-\sum_{k=0}^n\alpha_{n-k}\delta_x^2 \Phi_j^k \Bigg\}=0.
\end{aligned}\end{equation}
For $\sum_{k=0}^n\theta_{n-k}\delta_x^2 U_j^k$, we have
\begin{equation}\label{consitancy-6}\begin{aligned}
&\lim_{\tau \to0,h\to 0}\sum_{k=0}^n\theta_{n-k}\delta_x^2 U_j^k
=\lim_{\tau \to0}\sum_{k=0}^n\theta_{n-k}\px[x]^2U(x_j,t_k)\\
=&\lim_{\tau \to0}\sum_{k=0}^n\theta_{n-k}\px[x]^2\left(U(x_j,t)+(k-n)\tau\px[t]U(x_j,\xi({k,n}))\right){\quad}(0<\xi({k,n})<t)\\
=&\px[x]^2U(x_j,t)+\lim_{\tau \to0}\sum_{k=0}^n\theta_{n-k}(k-n)\tau\px[x]^2\px[t]U(x_j,\xi({k,n}))
=\px[x]^2U(x_j,t),
\end{aligned}\end{equation}
where we have used $\sum_{k=0}^n\theta_k\to 1$ as $n\to \infty$ and
\begin{equation}\label{consitancy-7}\begin{aligned}
&\lim_{\tau \to0}\Big|\sum_{k=0}^n\theta_{n-k}(k-n)\tau\px[x]^2\px[t]U(x_j,\xi({k,n}))\Big|{\quad} (t=n\tau)\\
\leq& C_1|t|\lim_{n \to\infty}\sum_{k=0}^n|\theta_{n-k}|\frac{n-k}{n}
\leq C_2|t|\lim_{n \to\infty}\sum_{k=1}^n(n-k)^{-\beta}n^{-1}=0,
\end{aligned}\end{equation}
in which  $\theta_k=2^{-\beta}\binom{\beta}{k}=O(k^{-\beta-1})$ and $\sum_{k=1}^n(n-k)^{-\beta} \leq C_3$ have been used,
$C_1,C_2,C_3$ are positive constants independent of $n,\tau$ and $h$.
Combining \eqref{consitancy-4}--\eqref{consitancy-7} yields \eqref{consitancy-2}.

We now give the following consistency theorem.
\begin{theorem}\label{thm-consist-1}
Suppose that $U(x,t)$ is the solution to \eqref{eq:subeq}, $U\in C^2(C^4(I);[0,T])$.
The finite difference scheme \eqref{fem1} is consistent of order $O(\tau^2+h^2)$.
\end{theorem}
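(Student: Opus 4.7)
The plan is to reduce the statement to proving the truncation bound $R_j^n=O(\tau^{\beta}(\tau^2+h^2))$ for the residual appearing in \eqref{consitancy-1}, since then $\tau^{-\beta}R_j^n=O(\tau^2+h^2)$ is precisely the stated consistency order: the other limits required to match the normalized convolutions with their continuous counterparts ${}_{C}D_{0,t}^{\beta}U$ and $f$ have already been justified in \eqref{consitancy-4}--\eqref{consitancy-7}, and the stated order simply measures the rate at which those limits are attained.

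I would split $R_j^n=R_{\text{time}}^n+R_{\text{space}}^n$ and estimate each. For the temporal part, observe that $\varphi(x_j,t)$ was chosen exactly so that $U(x_j,t)-\varphi(x_j,t)$ vanishes together with its first time derivative at $t=0$, so under the given temporal regularity its Taylor expansion in $t$ starts at $t^2$. Applying Lemma~\ref{lm3.1} with $p=2$ and $\nu\ge 2$ term by term to this expansion produces the pointwise remainder $\widetilde{R}^n=O(t_n^{\beta}\tau^2)+O(t_n^{\beta-1}\tau^3)+O(\tau^{2+\beta})$ indicated above \eqref{eq:e1}. Then Lemma~\ref{lm3.3}, applied to the convolution $R_{\text{time}}^n=\sum_k \alpha_{n-k}\widetilde{R}^k$ exactly as in the derivation of \eqref{eq:e1}, yields $R_{\text{time}}^n=O(\tau^{2+\beta})$.

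For the spatial part, the $C^4(I)$ regularity in $x$ delivers $\delta_x^2 v(x_j)-\partial_x^2 v(x_j)=O(h^2)$ by Taylor expansion for any $v\in C^4(I)$. Applying this to $U^k-\varphi^k$ inside $\mu\tau^{\beta}\sum_{k=0}^n\theta_{n-k}(\cdot)$ produces an error bounded by $\mu\tau^{\beta}h^2\sum_{k=0}^n|\theta_{n-k}|$, which is $O(\tau^{\beta}h^2)$ since $\theta_k=2^{-\beta}(-1)^k\alpha_k=O(k^{-\beta-1})$ is absolutely summable. Applying the same expansion to $\Phi^k_j$ inside $\mu\sum_{k=0}^n\alpha_{n-k}\delta_x^2\Phi^k_j$ yields a convolution of the shape $\mu h^2\sum_{k=0}^n\alpha_{n-k}\,O(t_k^{\beta}+t_k^{\beta+1})$, which by \eqref{fem1-0-2} with $\nu=0,1$ combined with Lemma~\ref{lm3.3} is again $O(\tau^{\beta}h^2)$ uniformly in $n$.

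Combining the two estimates gives $R_j^n=O(\tau^{\beta}(\tau^2+h^2))$, which, after dividing by $\tau^{\beta}$, is the claimed consistency of order $O(\tau^2+h^2)$. The main obstacle is controlling the $\Phi$-contribution: its spatial truncation is convolved against the mixed-sign sequence $\alpha_{n-k}$ and involves the growing factors $t_k^{\beta}$ and $t_k^{\beta+1}$, and producing a clean, $n$-uniform $O(\tau^{\beta}h^2)$ bound there is precisely where Lemma~\ref{lm3.3} together with the identity \eqref{fem1-0-2} are essential, playing the spatial analogue of the role they played for the temporal convolution in \eqref{eq:e1}.
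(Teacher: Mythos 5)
Your proposal is correct, but it distributes the work differently from the paper. The paper's argument for Theorem~\ref{thm-consist-1} is the block \eqref{consitancy-1}--\eqref{consitancy-7}: it \emph{asserts} the residual bound $R_j^n=O(\tau^{\beta}(\tau^2+h^2))$ in \eqref{consitancy-1} (only its temporal part, $O(\tau^{2+\beta})$, having been derived earlier in \eqref{eq:VIE4}--\eqref{eq:e1}), and then spends the displayed effort on the qualitative limit \eqref{consitancy-2} --- the Gr\"unwald--Letnikov limits \eqref{consitancy-4}, the estimates $A_n,B_n=O(n^{-1})$ in \eqref{consitancy-5}, and the summability argument \eqref{consitancy-6}--\eqref{consitancy-7} showing $\sum_k\theta_{n-k}\delta_x^2U_j^k\to\partial_x^2U$. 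You invert the emphasis: you treat the quantitative residual bound as the content of the theorem and prove it, reproducing the paper's temporal estimate (Lemma~\ref{lm3.1} with $\nu\ge2$ followed by the $\alpha$-convolution and Lemma~\ref{lm3.3}, exactly as in \eqref{eq:e1}) and supplying the spatial part that the paper leaves implicit; your identification of the $\Phi$-convolution as the delicate term, and its control via Lemma~\ref{lm3.3} and \eqref{fem1-0-2}, is the genuinely new detail and it is the right one. Two points should be made explicit to close the argument. First, Lemma~\ref{lm3.3} bounds the \emph{signed} sums $\sum_k\alpha_{n-k}k^{\gamma-1}$, so writing the spatial truncation as $\sum_k\alpha_{n-k}\,O(t_k^{\beta}+t_k^{\beta+1})$ with a $k$-dependent implied constant would not be legitimate; it works here only because $\Phi_j^k$ separates as $c_1\phi_0(x_j)t_k^{\beta}+c_2\psi_0(x_j)t_k^{\beta+1}$, so that $(\delta_x^2-\partial_x^2)\Phi_j^k=E_1(x_j)\,t_k^{\beta}+E_2(x_j)\,t_k^{\beta+1}$ with $k$-independent factors $E_i=O(h^2)$, and Lemma~\ref{lm3.3} then applies to the exact sums $\sum_k\alpha_{n-k}k^{\beta}$ and $\sum_k\alpha_{n-k}k^{\beta+1}$. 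Second, the temporal quadrature in \eqref{eq:t1} is applied to $\partial_x^2U-\partial_x^2\varphi$ rather than to $U-\varphi$; since this function also vanishes to second order at $t=0$ under the stated regularity, your Lemma~\ref{lm3.1} argument transfers verbatim, but it is worth saying. With these two clarifications your derivation of $R_j^n=O(\tau^{\beta}(\tau^2+h^2))$ is complete and, combined with the limit verification the paper carries out, gives the theorem.
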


Next, we discuss the convergence  for the scheme \eqref{fem1}. Denote by $e_j^n=U(x_j,t_n)-u_j^n$.
From \eqref{fem1} and \eqref{consitancy-1}, we obtain the error equation of \eqref{fem1} as follows
\begin{equation}\label{fem1-err}
\begin{aligned}
&\sum_{k=0}^n\alpha_{n-k}e_j^{k}
={\mu\tau^{\beta}}\sum_{k=0}^n\theta_{n-k}\delta^2_xe_j^k + R^n_j,
\end{aligned}
\end{equation}
where $R_j^n=O(\tau^{\beta}(\tau^2+h^2))=r_j^n\tau^{\beta}(\tau^2+h^2)$,  $r_j^n$ is bounded.

{Using the identity $R_j^n=\sum_{k=0}^n\alpha_{n-k}\sum_{l=0}^k\tilde{\alpha}_{k-l}R_j^l$
and Lemma \ref{lm3.2}, or applying the similar reasoning as Lemma 3.4 in  \cite{ZengLi2013a},}
we can derive the equivalent form of \eqref{fem1-err} as
\begin{equation}\label{fem1-err-1}
\begin{aligned}
&e_j^{n}={\mu\tau^{\beta}}\sum_{k=0}^n\omega^{(\beta)}_{n-k}\delta^2_xe_j^k + G_j^n,
\end{aligned}
\end{equation}
where $G_j^n=\sum_{k=0}^n\tilde{\alpha}_{n-k} R^k_j=O(\tau^2+h^2)$,
{$\tilde{\alpha}_{k}=(-1)^k\binom{-\beta}{k}$ is the coefficient
of the Taylor expansion of
the generating function $\tilde{\alpha}(z)=(\alpha(z))^{-1}=(1-z)^{-\beta}$},  $\omega^{(\beta)}_{k}$
is the coefficient of the Taylor expansion of the generating function $\theta(z)/\alpha(z)$.
Let $e^n_j=\epsilon^n\exp(ij\sigma h)$ and $G_j^n=\eta^n\exp(ij\sigma h)$, $\eta^n$ is bounded.
Similar to \eqref{fem1-2}, we can obtain from \eqref{fem1-err-1}
\begin{equation}\label{fem1-err-2}\begin{aligned}
\epsilon^n=S^*\sum_{k=0}^n\omega^{(\beta)}_{n-k}\epsilon^k + \eta^n(\tau^2+h^2),
{\quad}S^*=-\frac{4\mu\tau^{\beta}}{h^2}\sin^2\left(\frac{\sigma h}{2}\right),
\end{aligned}\end{equation}
which yields
\begin{equation}\label{fem1-err-3}\begin{aligned}
\epsilon(z)=S^*\epsilon(z)\theta(z)/\alpha(z) + \eta(z)(\tau^2+h^2),
\end{aligned}\end{equation}
where
$$\epsilon(z)=\sum_{k=0}^{\infty}\epsilon^kz^{k},{\quad}\eta(z)=\sum_{k=0}^{\infty}\eta^kz^{k}.$$
From \eqref{fem1-err-3}, we have
\begin{equation}\label{fem1-err-4}\begin{aligned}
\epsilon(z) =\frac{\eta(z)(\tau^2+h^2)}{1-S^*\theta(z)/\alpha(z)}
=\frac{(1-z)^{\beta}\eta(z)(\tau^2+h^2)}{(1-z)^{\beta}-S^*2^{-\beta}(1+z)^{\beta}}.
\end{aligned}\end{equation}
Denote by $D(z)=(1-z)^{\beta}-S^*2^{-\beta}(1+z)^{\beta}=\sum_{k=0}^nd_kz^k$. Then we have
$d_k=O(k^{-\beta-1}),1<\beta<2$. So the sequence $\{d_k\}$ is in $\ell^1$. From Theorem \ref{substblth}, we know that
$D(z)\neq 0$ for all $S^*\in \mathbb{S}$ and $|z|\leq 1$.   Denote by
$1/D(z)=\sum_{k=0}^n\hat{d}_kz^k$. Then the sequence $\{\hat{d}_k\}$ is also in $\ell^1$, see Eq. (2.5) in \cite{Lubich1986b}.
Let $(1-z)^{\beta}\eta(z)=\sum_{k=0}^nc_kz^k$. Then it is easy to obtain that
$|c_n|=|\sum_{k=0}^n\alpha_{k}\eta_{n-k}|\leq (\max_{0\leq k\leq n_T}{|\eta_k|})\sum_{k=0}^n|\alpha_{k}|
\leq 2^{\beta}\max_{0\leq k\leq n_T}{|\eta_k|}$.
From \eqref{fem1-err-4}, one has
\begin{equation}\label{fem1-err-5}
|\epsilon^n|=(\tau^2+h^2)\Big|\sum_{k=0}^n\hat{d}_kc_{n-k}\Big|
\leq  2^{\beta}\max_{0\leq k\leq n_T}{|\eta_k|}(\tau^2+h^2) \sum_{k=0}^n|\hat{d}_k|
\leq C(\tau^2+h^2).
\end{equation}
If $S^*=0$, then we directly have $|\epsilon^n|\leq C(\tau^2+h^2)$ from  \eqref{fem1-err-2}.
Now, we give the following convergence theorem.

\begin{theorem}\label{thm2-3}
Let $U(x,t)$ be the solution to \eqref{eq:subeq} and $u_j^n$ $(j=0,1,...,N,n=0,1,...,n_T)$ be the solutions
to \eqref{fem1}. Then there exists a positive constant $C$ independent of $n,\tau,$ and $h$, such that
$$\|e^n\|\leq C(\tau^2+h^2),$$
where $e^n=(e_0^n,e_1^n,...,e_N^n)^T,e_j^n=U(x_j,t_n)-u_j^n$, and
$\|e^n\|=\sqrt{h\sum_{j=0}^{N-1}(e_j^n)^2}.$
\end{theorem}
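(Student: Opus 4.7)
The plan is to mirror the stability argument of Theorem \ref{substblth}: derive the error recursion, invert the convolution on its left-hand side, Fourier-decompose in space to obtain a scalar forced recursion, and then apply generating-function bounds on $1/D(z)$ to extract a per-mode estimate. A Parseval step at the end converts this into the discrete norm bound in the theorem.

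First, I would subtract the scheme \eqref{fem1} from the consistency identity \eqref{consitancy-1} to get \eqref{fem1-err} with truncation $R_j^n=O(\tau^\beta(\tau^2+h^2))$. Applying Lemma \ref{lm3.2} with $\tilde{\alpha}(z)=(1-z)^{-\beta}$, I invert the $\alpha$-convolution on the left, obtaining the resolved form \eqref{fem1-err-1} in which the forcing $G_j^n=\sum_{k=0}^n\tilde{\alpha}_{n-k}R_j^k$ satisfies $|G_j^n|\le C(\tau^2+h^2)$; this follows because $\tilde{\alpha}_k=O(k^{\beta-1})$ and $\tau^\beta\sum_{k=1}^n k^{\beta-1}\le C\, t_n^\beta\le C\, T^\beta$.

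Next, expand in the spatial Fourier modes: setting $e_j^n=\epsilon^n\exp(ij\sigma h)$ and $G_j^n=\eta^n\exp(ij\sigma h)$ and using that $\delta_x^2$ acts as multiplication by $-\tfrac{4}{h^2}\sin^2(\sigma h/2)$, the recursion collapses to the scalar equation \eqref{fem1-err-2} with $S^*\le 0$ and $|\eta^n|\le C(\tau^2+h^2)$. Passing to generating functions gives the closed form \eqref{fem1-err-4},
\begin{equation*}
\epsilon(z)=\frac{(1-z)^\beta\eta(z)(\tau^2+h^2)}{D(z)},\qquad D(z)=(1-z)^\beta-S^*2^{-\beta}(1+z)^\beta.
\end{equation*}

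The main obstacle is controlling the Taylor coefficients $\hat d_k$ of $1/D(z)$: I need $\sum_k|\hat d_k|\le C$ with $C$ independent of $S^*\le 0$ (and hence of $\sigma,\tau,h$). Theorem \ref{substblth} already provides $D(z)\ne 0$ for every $|z|\le 1$ and every $S^*\in\mathbb S\supset(-\infty,0]$, while a direct expansion yields the decay $d_k=O(k^{-\beta-1})$, so $\{d_k\}\in\ell^1$; the Wiener-type result (Eq.~(2.5) of \cite{Lubich1986b}) then transfers this to $\{\hat d_k\}\in\ell^1$ with a uniform bound. Writing $(1-z)^\beta\eta(z)=\sum c_k z^k$ with $|c_n|\le 2^\beta\max_k|\eta^k|\le C(\tau^2+h^2)$, the convolution representation together with Young's inequality gives $|\epsilon^n|\le C(\tau^2+h^2)$ uniformly in $\sigma$ (the exceptional mode $S^*=0$ is handled directly from \eqref{fem1-err-2}). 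Finally, discrete Parseval over the spatial grid turns this per-mode bound into $\|e^n\|\le C(\tau^2+h^2)$, completing the proof.
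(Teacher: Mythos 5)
Your proposal follows essentially the same route as the paper: form the error equation from \eqref{fem1} and \eqref{consitancy-1}, invert the $\alpha$-convolution via Lemma \ref{lm3.2} to get \eqref{fem1-err-1}, pass to Fourier modes and generating functions to reach \eqref{fem1-err-4}, bound the coefficients of $1/D(z)$ in $\ell^1$ using the nonvanishing of $D(z)$ from Theorem \ref{substblth} together with the Wiener-type result of Lubich, and conclude the per-mode bound \eqref{fem1-err-5} before summing over the grid. Your explicit justification that $G_j^n=O(\tau^2+h^2)$ via $\tilde{\alpha}_k=O(k^{\beta-1})$, and your flagging of the need for uniformity of $\sum_k|\hat d_k|$ in $S^*$, are points the paper leaves implicit, but the argument is the same.
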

\begin{proof}
From $e^n_j=\epsilon^n\exp(ij\sigma h)$ and \eqref{fem1-err-5}, one has $|e_j^n|\leq C(\tau^2+h^2)$. So
$$\|e^n\|^2=h\sum_{j=0}^{N-1}(e_j^n)^2\leq C(\tau^2+h^2)^2,$$
which completes the proof.
\end{proof}

\section{The finite difference schemes based on the  generalized Newton-Gregory formula and its modification}
In this section, we construct another second-order difference scheme for  \eqref{eq:subeq} with the
help of the generating function
${w^{(\beta)}(z)=\frac{1-\frac{\beta}{2}+\frac{\beta}{2}z}{(1-z)^{\beta}}}$, see \eqref{eq:w2}
with $p=2$. Similar to \eqref{eq:t1},  we can derive the following time discretization with the help
of the generating function {$w^{(\beta)}(z)=\frac{1-\frac{\beta}{2}+\frac{\beta}{2}z}{(1-z)^{\beta}}$}.
\begin{equation}\label{eq:t2-0}\begin{aligned}
\sum_{k=0}^n\alpha_{n-k}(U^{k}-\varphi^k)
=&{\mu\tau^{\beta}}\sum_{k=0}^n\theta_{n-k}\left(\px^2 U^k-\px^2\varphi^k\right)
+\mu\,\sum_{k=0}^n\alpha_{n-k}\px[x]^2\Phi^k + \sum_{k=0}^n\alpha_{n-k}F^k+R^n,
\end{aligned}\end{equation}
where $\alpha_k=(-1)^k\binom{\beta}{k},\theta_0=1-\frac{\beta}{2},\theta_1=\frac{\beta}{2},\theta_k=0(k\geq 2)$,
$\varphi(x,t)=\phi_0(x)+\psi_0(x)t$,
$\Phi^k=\left[D^{-\beta}_{0,t}\varphi(x,t)\right]_{t=t_k}$,
 $F^k=\left[D^{-\beta}_{0,t}f(x,t)\right]_{t=t_k}$  and $R^n$ is the discretization error in time
satisfying $|R^n|{\,\leq\,}C\tau^{2+\beta}$.

From \eqref{eq:t2-0}, we can derive the  fully discrete finite difference scheme for \eqref{eq:subeq} as:  Find $u_{j}^n$ for $j=1,2,...,N-1,n=1,2,...,n_T$, such that
\begin{equation}\label{fem2-0-2}
\left\{\begin{aligned}
&\sum_{k=0}^n\alpha_{n-k}(u_j^{k}-\varphi_j^k)
={\mu\tau^{\beta}}\sum_{k=0}^n\theta_{n-k}\left(\delta^2_xu_j^k-\delta^2_x\varphi_j^k\right)
+\mu\,\sum_{k=0}^n\alpha_{n-k}\delta_x^2\Phi^k_j + \sum_{k=0}^n\alpha_{n-k}F^k_j,\\
&u_0^k=U_a(t_k),{\quad}u_N^k=U_b(t_k),{\qquad}k=0,1,...,n_T.\\
&u_j^0=\phi_0(x_j),
\end{aligned}\right.
\end{equation}
where $\alpha_k=(-1)^k\binom{\beta}{k},\theta_0=1-\frac{\beta}{2},\theta_1=\frac{\beta}{2},\theta_k=0(k\geq 2)$,
$\Phi^k_j=\left[D^{-\beta}_{0,t}\varphi(x_j,t)\right]_{t=t_k}=
\frac{\phi_0(x_j)t_k^{\beta}}{\Gamma(\beta+1)}
+\frac{\psi_0(x_j)t_k^{\beta+1}}{\Gamma(\beta+2)}$,
$\varphi_j^k=\phi_0(x_j)+\psi_0(x_j)t_k$,
and $F_j^k=\left[D^{-\beta}_{0,t}f(x_j,t)\right]_{t=t_k}$.

If $\beta\to 2$, then the method \eqref{fem2-0-2} 
a conditionally stable scheme $\frac{u_j^{n+1}-2u_j^n+u_j^{n-1}}{\tau^2}={\mu}\delta_x^2u_j^n+(F_j^{n+1}-2F_j^n+F^{n-1}_j)$
requiring $\frac{\tau^2}{h^2}\leq \frac{1}{\mu}$.
We may think that the method \eqref{fem2-0-2} is also conditionally stable. Similar to Theorem  \ref{substblth},
we can indeed obtain  the stability region for the method \eqref{fem2-0-2} below
$$\mathbb{S}=\mathbb{C}\setminus \left\{\frac{\alpha(z)}{\theta(z)}:|z|\leq 1\right\}
=\mathbb{C}\setminus \left\{\frac{(1-z)^{\beta}}{1-\frac{\beta}{2}+\frac{\beta}{2}z}:|z|\leq 1,1<\beta<2\right\}.$$
The above stability region contains the interval $[\frac{2^{\beta}}{1-\beta},0)$, which implies
$$\frac{2^{\beta}}{1-\beta}\leq-\frac{4\mu\tau^{\beta}}{h^2}\sin^2\left(\frac{\sigma h}{2}\right)<0.$$
From the above inequality, we can derive a CLF condition for the method \eqref{fem2-0-2}  as follows
\begin{equation}\label{scond}
\frac{2^{\beta}}{1-\beta}\leq-\frac{4\mu\tau^{\beta}}{h^2}<0,{\quad}\Longleftrightarrow{\quad}
0<\frac{\mu(\beta-1)\tau^{\beta}}{2^{\beta-2}h^2}\leq 1.
\end{equation}

Next, we make a slight modification of the scheme \eqref{fem2-0-2}  such that the derived scheme is stable for
any given real value of $\tau^{\beta}/h^2$.  We make a slight modification of the first term
$\sum_{k=0}^n\theta_{n-k}\left(\px^2 U^k-\px^2\varphi^k\right)$ in the right hand side of \eqref{eq:t2-0}
as follows
\begin{equation}\label{fem2-0-4}
\begin{aligned}
\sum_{k=0}^n\theta_{n-k}\left(\px^2 U^k-\px^2\varphi^k\right)
=&(1-\frac{\beta}{2})\left(\px^2 U^n-\px^2\varphi^n\right)
+\frac{\beta}{2}\left(\px^2 U^{n-1}-\px^2\varphi^{n-1}\right)\\
=&(1-\frac{\beta}{4})\left(\px^2 U^n-\px^2\varphi^n\right)
+\frac{\beta}{4}\left(\px^2 U^{n-2}-\px^2\varphi^{n-2}\right)+O(\tau^2).
\end{aligned}
\end{equation}
Combining \eqref{eq:t2-0} and \eqref{fem2-0-4}, we obtain the following new time discretization approach.
\begin{itemize}
\item \textbf{Time discretization II:}
\begin{equation}\label{eq:t2}\begin{aligned}
\sum_{k=0}^n\alpha_{n-k}(U^{k}-\varphi^k)
=&{\mu\tau^{\beta}}\sum_{k=0}^n\theta_{n-k}\left(\px^2 U^k-\px^2\varphi^k\right)
+\mu\,\sum_{k=0}^n\alpha_{n-k}\px^2\Phi^k + \sum_{k=0}^n\alpha_{n-k}F^k+R^n,
\end{aligned}\end{equation}
where $\alpha_k=(-1)^k\binom{\beta}{k},\theta_0=1-\frac{\beta}{4},\theta_1=0,\theta_2=\frac{\beta}{4},\theta_k=0(k\geq 3)$,
$\varphi(x,t)=U(x,0)+\px[t]U(x,0)t=\phi_0(x)+\psi_0(x)t$,
$\Phi^k=\left[D^{-\beta}_{0,t}\varphi(x,t)\right]_{t=t_k}=\frac{\phi_0(x)t_k^{\beta}}{\Gamma(\beta+1)}
+\frac{\psi_0(x)t_k^{\beta+1}}{\Gamma(\beta+2)}$,
 $F^k=\left[D^{-\beta}_{0,t}f(x,t)\right]_{t=t_k}$  and $R^n$ is the discretization error in time
satisfying $|R^n|{\,\leq\,}C\tau^{2+\beta}$.
\end{itemize}

From \eqref{eq:t2}, we can derive the following fully discrete finite difference scheme.
\begin{itemize}
\item \textbf{Scheme II:} Find $u_{j}^n$ for $j=1,2,...,N-1,n=1,2,...,n_T$, such that
\begin{equation}\label{fdm2}
\left\{\begin{aligned}
&\sum_{k=0}^n\alpha_{n-k}(u_j^{k}-\varphi_j^k)
={\mu\tau^{\beta}}\sum_{k=0}^n\theta_{n-k}\left(\delta^2_xu_j^k-\delta^2_x\varphi_j^k\right)
+\mu\,\sum_{k=0}^n\alpha_{n-k}\delta_x^2\Phi^k_j + \sum_{k=0}^n\alpha_{n-k}F^k_j,\\
&u_0^k=U_a(t_k),{\quad}u_N^k=U_b(t_k),{\qquad}k=0,1,...,n_T.\\
&u_j^0=\phi_0(x_j),
\end{aligned}\right.
\end{equation}
where $\alpha_k=(-1)^k\binom{\beta}{k},\theta_0=1-\frac{\beta}{4},\theta_1=0,\theta_2=\frac{\beta}{4},\theta_k=0(k\geq 3)$,
$\Phi^k_j=\left[D^{-\beta}_{0,t}\varphi(x_j,t)\right]_{t=t_k}=
\frac{\phi_0(x_j)t_k^{\beta}}{\Gamma(\beta+1)}
+\frac{\psi_0(x_j)t_k^{\beta+1}}{\Gamma(\beta+2)}$,
$\varphi_j^k=\phi_0(x_j)+\psi_0(x_j)t_k$,
and $F_j^k=\left[D^{-\beta}_{0,t}f(x_j,t)\right]_{t=t_k}$.
\end{itemize}

\begin{remark}
If $\beta \to 2$, then the  scheme \eqref{fdm2}  is reduced to  the following unconditionally stable scheme
 \begin{equation*}\label{fdm2-3}
 \begin{aligned}
 &\frac{u_j^{n+1}-2u_j^n+u_j^{n-1}}{\tau^2}
 =\frac{\mu}{2}\left(\delta_x^2u_j^{n+1}+\delta_x^2u^{n-1}_j\right)+(F_j^{n+1}-2F_j^n+F^{n-1}_j),{\quad}n\geq 1.
 \end{aligned}
 \end{equation*}
\end{remark}

Similar to Theorem \ref{substblth}, we can prove that the scheme \eqref{fdm2} is stable, we just need
to replace $\theta(z)=2^{-\beta}(1+z)^{\beta}$ in \eqref{fem1} with $\theta(z)=1-\frac{\beta}{4}+\frac{\beta}{4}z^2$
to get the desired result.
Like Theorems \ref{thm-consist-1} and \ref{thm2-3}, we can easily  prove that  the scheme \eqref{fdm2} is consistent
and convergent  of order $O(\tau^2+h^2)$.

\begin{remark}
In fact, we use a new generating function   $w^{(\beta)}(z)=\frac{1-\frac{\beta}{4}+\frac{\beta}{4}z^2}{(1-z)^{\beta}}$
in the construction of the scheme \eqref{fdm2}.
\end{remark}

%
%

\section{Fractional diffusion-wave with linear advection-reaction term}
In this section, we extend the time discretization techniques used in \eqref{fem1} and \eqref{fdm2} to the following equation
\begin{equation}\label{eq:wave2}
\left\{\begin{aligned}
&{}_{C}D_{0,t}^{\beta}U(x,t)+K_1U(x,t)+K_2\px[x]U(x,t)=\mu\,\px^2U(x,t)+f(x,t),\\
&{\qquad\qquad\qquad\qquad}(x,t){\,\in\,}I{\times}(0,T],I=(a,b),T>0,\\
&U(x,0)=\phi_0(x),{\quad}\px[t]U(x,0)=\psi_0(x),{\quad}x{\,\in\,}I,\\
&U(a,t)=U_a(t),{\quad}U(b,t)=U_b(t),{\quad}t{\,\in\,}(0,T],
\end{aligned}\right.
\end{equation}
where $1<\beta<2,\,\mu>0,K_1,K_2\geq 0$. See e.g. \cite{ChenLiu-etal2012} for  the case of $K_2=0$.

The time in \eqref{eq:wave2} is discretized similarly to the technique used in \eqref{fem1} or \eqref{fdm2},
the first-order and second-order space derivative operators are both discretized by the central difference method,
we directly give the fully scheme for   \eqref{eq:wave2} as follows.
\begin{itemize}
\item \textbf{Scheme III ($m$):} Find $u_{j}^n$ for $j=1,2,...,N-1,n=0,1,2,...,n_T-1$, such that
\begin{equation}\label{fdm-e1}
\left\{\begin{aligned}
&\sum_{k=0}^n\alpha_{n-k}(u_j^{k}-\varphi_j^k)
={\mu\tau^{\beta}}\sum_{k=0}^n\theta_{n-k}\left(\delta^2_xu_j^k-\delta^2_x\varphi_j^k\right)
+\mu\,\sum_{k=0}^n\alpha_{n-k}\delta_x^2\Phi^k_j + \sum_{k=0}^n\alpha_{n-k}F^k_j,\\
&{\qquad\qquad\qquad}-{K_2\tau^{\beta}}\sum_{k=0}^n\theta_{n-k}\left(\delta_{\hat{x}}u_j^k-\delta_{\hat{x}}\varphi_j^k\right)
-K_2\,\sum_{k=0}^n\alpha_{n-k}\delta_{\hat{x}}\Phi^k_j \\
&{\qquad\qquad\qquad}-{K_1\tau^{\beta}}\sum_{k=0}^n\theta_{n-k}\left(u_j^k-\varphi_j^k\right)
-K_1\,\sum_{k=0}^n\alpha_{n-k}\Phi^k_j, \\
&u_0^k=U_a(t_k),{\quad}u_N^k=U_b(t_k),{\qquad}k=0,1,...,n_T.\\
&u_j^0=\phi_0(x_j),
\end{aligned}\right.
\end{equation}
where $\alpha_k=(-1)^k\binom{\beta}{k}$,
$\Phi^k_j=\left[D^{-\beta}_{0,t}\varphi(x_j,t)\right]_{t=t_k}=
\frac{\phi_0(x_j)t_k^{\beta}}{\Gamma(\beta+1)}
+\frac{\psi_0(x_j)t_k^{\beta+1}}{\Gamma(\beta+2)}$,
$\varphi_j^k=\phi_0(x_j)+\psi_0(x_j)t_k$,
 $F_j^k=\left[D^{-\beta}_{0,t}f(x_j,t)\right]_{t=t_k}$,
and
\begin{equation}
m=\left\{\begin{aligned}
&1,{\qquad}\theta_k=\frac{(-1)^k}{2^{\beta}}\alpha_k,k=0,1,...;\\
&2,{\qquad}\theta_0=1-\frac{\beta}{4},\,\theta_1=0,\,\theta_2=\frac{\beta}{4},\,\theta_k=0\,(k\geq 3).
\end{aligned}\right.
\end{equation}
\end{itemize}

{Similar to Theorem \ref{substblth},
the finite difference method \eqref{fdm-e1} can be proven to be
stable, we just need to
replace $S^*$ in the proof of Theorem \ref{substblth} with
$$S^*=-\frac{4\mu\tau^{\beta}}{h^2}\sin^2\left(\frac{\sigma h}{2}\right)
-K_1\tau^{\beta}-i\frac{K_2\tau^{\beta}}{h}\sin(\sigma h)$$
to reach the conclusion. The consistency  of order $O(\tau^2+h^2)$ of  \eqref{fdm-e1}
can be also similarly proved as that of Theorem \ref{thm-consist-1}.
The stability and convergence rate are also shown   numerically in
the following section.}

\section{Numerical examples}\label{sec4}
In this section, we present numerical examples to verify the theoretical analysis in the previous sections.
We first numerically verify the error estimates and the convergence orders of
Scheme I (see Eq. \eqref{fem1}), the scheme \eqref{fem2-0-2},  and Scheme II (see Eq. \eqref{fdm2}).

\begin{example}\label{eg1}
Consider the following  diffusion-wave equation \cite{Sun06}
\begin{equation}\label{eq:eg1}
\left\{\begin{aligned}
&{}_{C}D_{0,t}^{\beta}U(x,t)=\px[x]^2U(x,t)+f(x,t),{\quad}(x,t){\,\in\,}(0,1){\times}(0,1],\\
&U(x,0)=2\exp(x),{\quad}\px[t]U(x,0)=\exp(x),{\quad}x{\,\in\,}(0,1),\\
&U(0,t)=t^{2+\beta}+t^2+t+2, {\quad}U(1,t)=(t^{2+\beta}+t^2+t+2)\exp(1){\quad}t\in(0,1],
\end{aligned}\right.
\end{equation}
Choose a suitable right hand side function $f(x,t)$ such that the exact solution  to  \eqref{eq:eg1} is
$$U(x,t)=(t^{2+\beta}+t^2+t+2)\exp(x).$$
\end{example}

Denote  $e_j^n=e_j^n(\tau,h)=U(x_j,t_n)-u_j^n$ as the error equation at time level $n$.
The convergence orders in time and space in the sense of the $L^2$ norm are defined as
\begin{equation}\label{eq:order}
\text{order}=\left\{
\begin{aligned}
&{\log(\|e^n(\tau_1,h)\|/\|e^n(\tau_2,h)\|)}/{\log(\tau_1/\tau_2)},{\quad} \text{in time},\\
&{\log(\|e^n(\tau,h_1)\|/\|e^n(\tau,h_2)\|)}/{\log(h_1/h_2)},{\quad} \text{in space},
\end{aligned}\right.
\end{equation}
where $\tau,\tau_1,\tau_2\,(\tau_1\neq\tau_2)$ and
$h,h_1,h_2\,(h_1{\neq}h_2)$ are the time and space step sizes, respectively, and
$$\|e^n\|=\left(h\sum_{j=0}^{N-1}(U(x_j,t_n)-u_j^n)^2\right)^{1/2}.$$

We first check the accuracy of the schemes \eqref{fem1} and \eqref{fdm2} in time,   and
the space and time steps sizes are
chosen as $h=1/1000$ and $\tau=1/16,1/32,1/64,1/128,1/256$,  the    $L^2$ error $\max_{0{\leq}n{\leq}n_T}\|e^n\|$
is shown in Table \ref{tb1-1}. It is found that   Scheme I \eqref{fem1} and   II  \eqref{fdm2} both show second-order
accuracy in time for different fractional order $\beta\,(\beta=1.1,1.5,1.9)$, which is inline with the theoretical analysis.
In Table \ref{tb1-2}, we show the convergence rates in space for the two schemes \eqref{fem1} and \eqref{fdm2}, from which
the second-order accuracy is observed.

We also test the accuracy and stability of the method \eqref{fem2-0-2}. From \eqref{scond}, one knows that the method
\eqref{fem2-0-2} is stable if $0<r=\frac{\mu(\beta-1)\tau^{\beta}}{2^{\beta-2}h^2}\leq1$.
We use the method \eqref{fem2-0-2} to solve \eqref{eq:eg1}, the numerical results are shown in  Table \ref{tb1-3},
which shows that the method  \eqref{fem2-0-2} is stable for $r\leq 1$, and unstable for $r>1$
(see stars * in Table \ref{tb1-3}, which means the numerical solutions blow up when $r>1$).  The numerical result is inline with
the theoretical result \eqref{scond}. The numerical results
in Table \ref{tb1-3} also show second-order accuracy both in time and space by the simple calculation
using \eqref{eq:order}.

Here, we also compare Scheme I and Scheme II with the finite difference scheme developed in \cite{Sun06}
with convergence of order $O(\tau^{3-\beta}+h^2)$, the results are shown in Table \ref{tb1-4}.
Obviously, the present methods show better performances because of their high-order convergence in time,
especially when $\beta$ tends to $2$.

\begin{table}
\caption{{The    $L^{2}$ errors $\max\limits_{0{\leq}n{\leq}n_T}\|e^n\|$ for Example \ref{eg1}, $N=1000$.}}\label{tb1-1}
\begin{tabular}{llllllll}
\hline\noalign{\smallskip}
Methods & $1/\tau$ & $\beta=1.1$ & order& $\beta=1.5$ & order& $\beta=1.9$ & order \\
\noalign{\smallskip}\hline\noalign{\smallskip}
            &16 &3.8171e-4&      &8.3180e-4&      &8.5199e-4&       \\
            &32 &9.6497e-5&1.9839&2.0297e-4&2.0350&1.5672e-4&2.4427\\
Scheme I    &64 &2.4292e-5&1.9900&5.0160e-5&2.0167&3.2382e-5&2.2749\\
\eqref{fem1}&128&6.1308e-6&1.9863&1.2504e-5&2.0042&7.2483e-6&2.1595\\
            &256&1.5774e-6&1.9585&3.1593e-6&1.9847&1.6813e-6&2.1081\\
 \hline
            &16 &1.5850e-3&      &2.4395e-3&      &2.0188e-3&       \\
            &32 &4.0136e-4&1.9815&6.0870e-4&2.0028&5.0043e-4&2.0122 \\
Scheme II   &64 &1.0101e-4&1.9904&1.5214e-4&2.0004&1.2407e-4&2.0120 \\
\eqref{fdm2}&128&2.5374e-5&1.9931&3.8069e-5&1.9987&3.0911e-5&2.0050 \\
            &256&6.3960e-6&1.9881&9.5594e-6&1.9936&7.7600e-6&1.9940 \\
\noalign{\smallskip}\hline
\end{tabular}
\end{table}

\begin{table}
\caption{{The   $L^{2}$ errors $\max\limits_{0{\leq}n{\leq}n_T}\|e^n\|$ for Example \ref{eg1},
$\tau=5\times 10^{-4}$.}}\label{tb1-2}
\begin{tabular}{llllllll}
\hline\noalign{\smallskip}
Methods & $N$ & $\beta=1.1$ & order& $\beta=1.5$ & order& $\beta=1.9$ & order \\
\hline\noalign{\smallskip}
            &16 &2.2415e-4&      &2.2608e-4&      &2.7949e-4&       \\
            &32 &5.6079e-5&1.9989&5.6560e-5&1.9989&6.9901e-5&1.9994\\
Scheme I    &64 &1.4040e-5&1.9979&1.4178e-5&1.9961&1.7486e-5&1.9991\\
\eqref{fem1}&128&3.5287e-6&1.9923&3.5826e-6&1.9846&4.3816e-6&1.9967\\
            &256&9.0088e-7&1.9697&9.3364e-7&1.9401&1.1050e-6&1.9875\\
 \hline
            &16 &2.2423e-4&      &2.2618e-4&      &2.7960e-4&       \\
            &32 &5.6158e-5&1.9974&5.6665e-5&1.9969&7.0014e-5&1.9977 \\
Scheme II   &64 &1.4119e-5&1.9919&1.4283e-5&1.9881&1.7599e-5&1.9922 \\
\eqref{fdm2}&128&3.6078e-6&1.9684&3.6876e-6&1.9536&4.4937e-6&1.9695 \\
            &256&9.7994e-7&1.8803&1.0386e-6&1.8280&1.2177e-6&1.8838 \\
\hline\noalign{\smallskip}
\end{tabular}
\end{table}

\begin{table}
\caption{{The  $L^{2}$ errors $\max\limits_{0{\leq}n{\leq}n_T}\|e^n\|$ for Example \ref{eg1} with method \eqref{fem2-0-2},
$r=\frac{\tau^{\beta}(\beta-1)}{h^22^{\beta-2}}$.}}\label{tb1-3}
\begin{tabular}{llllllll}
\hline\noalign{\smallskip}
$1/\tau$ & $1/h$ & $\beta=1.1$ & $r$ & $\beta=1.5$ & $r$& $\beta=1.9$ & $r$ \\
\hline\noalign{\smallskip}
200 &16 &2.2620e-4& 0.1406&2.2762e-4& 0.0640&2.7163e-4&0.0105 \\
200 &32 &5.8128e-5& 0.5625&5.8100e-5& 0.2560&6.2044e-5&0.0419 \\
200 &64 &3.5957e-2& 2.2499&1.5717e-5& 1.0240&9.6312e-6&0.1678 \\
200 &128&*        & 8.9994&*        & 4.0960&4.2557e-6&0.6711 \\
200 &256& *       &35.9976&*        &16.3840&*        &2.6845 \\
 \hline
1000&16 &2.2421e-4&0.0239&2.2609e-4&0.0057&2.7916e-4&0.0005 \\
1000&32 &5.6137e-5&0.0958&5.6573e-5&0.0229&6.9573e-5&0.0020 \\
1000&64 &1.4098e-5&0.3831&1.4190e-5&0.0916&1.7158e-5&0.0079 \\
1000&128&*        &1.5323&3.5947e-6&0.3664&4.0529e-6&0.0315 \\
1000&256& *       &6.1292&*        &1.4654&7.7662e-7&0.1261 \\
 \hline
2000&16 &2.2415e-4&0.0112&2.2604e-4&0.0020&2.7940e-4&0.0001 \\
2000&32 &5.6075e-5&0.0447&5.6525e-5&0.0081&6.9809e-5&0.0005 \\
2000&64 &1.4036e-5&0.1787&1.4143e-5&0.0324&1.7394e-5&0.0021 \\
2000&128&3.5246e-6&0.7148&3.5476e-6&0.1295&4.2891e-6&0.0084 \\
2000&256&*        &2.8594&8.9864e-7&0.5181&1.0133e-6&0.0338 \\
\hline\noalign{\smallskip}
\end{tabular}
\end{table}

\begin{table}
\caption{{Comparison of the    $L^{2}$ errors $\max\limits_{0{\leq}n{\leq}n_T}\|e^n\|$ of different methods, $N=1000$.}}\label{tb1-4}
\begin{tabular}{llllllll}
\hline\noalign{\smallskip}
Methods & $1/\tau$ & $\beta=1.1$ &$\beta=1.3$& $\beta=1.5$ &$\beta=1.65$& $\beta=1.8$ &  $\beta=1.95$ \\
\hline\noalign{\smallskip}
                &16 &6.0565e-4&8.1295e-4&1.0579e-3&1.2092e-3&1.0682e-3&1.0472e-3 \\
                &32 &1.5249e-4&2.0324e-4&2.5943e-4&2.9083e-4&2.5239e-4&1.9743e-4\\
Scheme I        &64 &3.8249e-5&5.0792e-5&6.4231e-5&7.1239e-5&6.1129e-5&4.1599e-5\\
\eqref{fem1}    &128&9.5771e-6&1.2694e-5&1.5978e-5&1.7625e-5&1.5031e-5&9.4623e-6\\
                &256&2.3960e-6&3.1726e-6&3.9844e-6&4.3830e-6&3.7264e-6&2.2504e-6\\
 \hline
                &16 &1.8083e-3&2.2323e-3&2.6654e-3&2.9169e-3&2.8089e-3&1.8335e-3 \\
                &32 &4.5731e-4&5.6274e-4&6.6515e-4&7.2234e-4&6.9427e-4&4.6041e-4 \\
Scheme II       &64 &1.1497e-4&1.4126e-4&1.6621e-4&1.7965e-4&1.7216e-4&1.1465e-4 \\
\eqref{fdm2}    &128&2.8820e-5&3.5385e-5&4.1543e-5&4.4793e-5&4.2845e-5&2.8574e-5 \\
                &256&7.2146e-6&8.8547e-6&1.0385e-5&1.1183e-5&1.0686e-5&7.1308e-6 \\
 \hline
                &16 &8.2488e-4&2.4537e-3&8.0555e-3&1.9355e-2&4.3571e-2&9.0347e-2\\
                &32 &2.1707e-4&7.4231e-4&2.8287e-3&7.6281e-3&1.9237e-2&4.4283e-2\\
Method\cite{Sun06} &64 &5.7161e-5&2.2527e-4&9.9490e-4&2.9957e-3&8.4268e-3&2.1535e-2\\
                &128&1.5065e-5&6.8536e-5&3.5040e-4&1.1750e-3&3.6782e-3&1.0435e-2\\
                &256&3.9737e-6&2.0894e-5&1.2354e-4&4.6078e-4&1.6029e-3&5.0475e-3\\
\hline\noalign{\smallskip}
\end{tabular}
\end{table}

\begin{example}\label{eg2}
{Consider the following  equation
\begin{equation}\label{eq:eg2}
\left\{\begin{aligned}
&{}_{C}D_{0,t}^{\beta}U(x,t)+U(x,t)+\px[x]U(x,t)=\px[x]^2U(x,t)+f(x,t),{\quad}(x,t){\,\in\,}(0,1){\times}(0,1],\\
&U(x,0)=1,{\quad}\px[t]U(x,0)=-x,{\quad}x{\,\in\,}(0,1),\\
&U(0,t)=1, {\quad}U(1,t)= \exp(-t){\quad}t\in(0,1],
\end{aligned}\right.
\end{equation}
where $1<\beta<2$. Choose the suitable $f(x,t)$ satisfies
$$f(x,t)=x^2t^{2-\beta}\sum_{k=0}^{\infty}\frac{(-xt)^k}{\Gamma(k+3-\beta)}
+\exp(-xt)-t\exp(-xt)-t^2\exp(-xt)$$
such that \eqref{eq:eg2} has the following analytical solution
$$U(x,t)=\exp(-xt).$$}
\end{example}

In this example, we test the convergence rates of Scheme III (1) and Scheme III (2), and we also compare the present
methods with the existing time discretization used in \cite{DingLi2013}, see also \cite{JinLazarovZhou2014},
where the second-order fractional backward difference formula
was used to discretize the Caputo derivative operator. We choose the time step size
and space step size as $\tau=h$,
the  $L^2$ errors for different fractional order $\beta$ are shown in Table  \ref{tb2-1}.
Clearly, the two methods of the present paper show second-order
accuracy both in time and space, while the second-order method in \cite{DingLi2013,JinLazarovZhou2014}
do not show second-order accuracy, especially when $\beta$ tends to 2, the method
in \cite{DingLi2013} is   reduced to
\begin{equation*}\label{cls-4}
\begin{aligned}
{\frac{1}{\tau^2}
\left({\frac{9}{4}u_j^{n+1}-6u_j^n+\frac{11}{2}u_j^{n-1}-2u_{j}^{n-2}+\frac{1}{4}u_j^{n-3}}\right)
={\mu}\delta_x^2u_j^{n+1}+f^{n+1}_j},
\end{aligned}
\end{equation*}
where $u_j^k$ $(k=1,2,3)$ 
should be derived with any known high-order methods,
or the second-order accuracy will possibly be lost.

\begin{table}
\caption{{Comparison of the  $L^{2}$ errors  at $t=1$
for Example \ref{eg2}, $\tau=h=1/N$.}}\label{tb2-1}
\begin{tabular}{cccccccc}
\hline\noalign{\smallskip}
$\beta $ &  $N$ &Scheme III (1) & order &  Scheme III (2)  & order &Method \cite{DingLi2013}  & order   \\
\hline\noalign{\smallskip}
       &32 &8.4650e-6&      &6.7927e-6&      &5.7930e-6&       \\
       &64 &2.1115e-6&2.0033&1.6853e-6&2.0110&1.4888e-6&1.9602 \\
$1.2$  &128&5.2733e-7&2.0015&4.1958e-7&2.0060&3.8205e-7&1.9623 \\
       &256&1.3178e-7&2.0006&1.0467e-7&2.0030&9.8097e-8&1.9615 \\
       &512&3.2939e-8&2.0003&2.6141e-8&2.0015&2.5239e-8&1.9586 \\
\hline
       &32 &6.2643e-6&      &6.8147e-6&      &3.2195e-5&       \\
       &64 &1.6018e-6&1.9675&1.6981e-6&2.0047&1.1534e-5&1.4809 \\
$1.5$  &128&4.0577e-7&1.9809&4.2395e-7&2.0020&4.0675e-6&1.5037 \\
       &256&1.0225e-7&1.9885&1.0592e-7&2.0010&1.4322e-6&1.5059 \\
       &512&2.5688e-8&1.9930&2.6469e-8&2.0005&5.0467e-7&1.5049 \\
\hline
       &32 &4.8058e-6&      &8.0483e-6&      &1.1786e-4&      \\
       &64 &1.2490e-6&1.9440&2.0105e-6&2.0011&5.5140e-5&1.0959\\
$1.7$  &128&3.2035e-7&1.9631&5.0129e-7&2.0039&2.3829e-5&1.2104\\
       &256&8.1431e-8&1.9760&1.2508e-7&2.0028&9.9913e-6&1.2540\\
       &512&2.0586e-8&1.9839&3.1233e-8&2.0017&4.1302e-6&1.2745\\
\hline
       &32 &4.8725e-6&      &7.1377e-6&      &1.5921e-4&       \\
       &64 &1.2457e-6&1.9676&1.8058e-6&1.9828&8.5657e-5&0.8943 \\
$1.8$  &128&3.1645e-7&1.9769&4.5167e-7&1.9993&4.0687e-5&1.0740 \\
       &256&7.9953e-8&1.9848&1.1286e-7&2.0008&1.8478e-5&1.1388 \\
       &512&2.0129e-8&1.9898&2.8213e-8&2.0000&8.2238e-6&1.1679 \\
\hline
       &32 &5.5981e-6&      &6.8122e-6&      &1.4257e-4&       \\
       &64 &1.4231e-6&1.9759&1.7064e-6&1.9972&9.7753e-5&0.5445 \\
$1.9$  &128&3.5907e-7&1.9867&4.2691e-7&1.9990&5.2516e-5&0.8964 \\
       &256&9.0096e-8&1.9947&1.0665e-7&2.0011&2.6086e-5&1.0095 \\
       &512&2.2531e-8&1.9996&2.6620e-8&2.0023&1.2548e-5&1.0558 \\
\hline
       &32 &7.9469e-6&      &1.1951e-5&      &1.1319e-4&      \\
       &64 &2.0114e-6&1.9822&2.9856e-6&2.0010&6.0261e-5&0.9094\\
$1.99$ &128&5.0676e-7&1.9888&7.4851e-7&1.9959&3.6628e-5&0.7183\\
       &256&1.2720e-7&1.9942&1.8752e-7&1.9970&2.0345e-5&0.8483\\
       &512&3.1860e-8&1.9972&4.6938e-8&1.9982&1.0681e-5&0.9296\\
       \hline
       &32 &9.2889e-6&      &1.3606e-5&      &1.3064e-4&      \\
       &64 &2.2342e-6&2.0558&3.3105e-6&2.0391&5.9485e-5&1.1350\\
$2$    &128&5.4612e-7&2.0324&8.1441e-7&2.0232&3.4066e-5&0.8042\\
       &256&1.3491e-7&2.0172&2.0190e-7&2.0121&1.8740e-5&0.8622\\
       &512&3.3520e-8&2.0090&5.0258e-8&2.0062&9.8582e-6&0.9267\\
\hline\noalign{\smallskip}
\end{tabular}
\end{table}

\section{Conclusion}
In this paper, we  propose three  finite difference schemes for the fractional diffusion-wave equation   \eqref{eq:subeq}.
The first one is based on  the fractional trapezoidal formula in time and the central difference in the space.
This scheme is  proven to be  stable by Fourier analysis with convergence order two in  both time and space.
The second scheme  is based on a  second-order generalized Newton-Gregory formula in time.
The second scheme is only conditionally stable, while
a slight modification of the second scheme   leads to the third scheme that is  stable.
The last two schemes are also  of order two in both time and space.
We extend the two of these time discretization techniques to
a class of fractional differential equations,
and  derived  stable schemes with second-order convergence   both in time and space.

When $\beta\to2$, the present methods \eqref{fem1}, \eqref{fem2-0-2}, and \eqref{fdm2}  becomes the corresponding  classical methods
for the classical  diffusion-wave equation, 
which is an important feature different from the time discretization techniques used in previous papers, see for example
\cite{CuestaLubich2006,DingLi2013,HuangTang2013,JinLazarovZhou2014,McLeanMustapha2007,YangHuang2014}.

We present  numerical experiments to verify the theoretical analysis, and   comparisons with
other methods   exhibit  better accuracy than many of the existing numerical methods.
The present methods  can be readily
extended to   two- and three-dimensional problems and the stability and convergence analysis
are similar to those given here.



\begin{thebibliography}{}
%
\bibitem{BhrawyDoha2014}
Bhrawy, A.H., Doha, E.H., Baleanu, D., Ezz-Eldien, S.S.:
A spectral tau algorithm based on Jacobi operational matrix
for numerical solution of time fractional diffusion-wave equations.
J. Comput. Phys., in press, 2014


{\bibitem{CaoXu13}
{Cao, J.Y., Xu, C.J.:}
{A high order schema for the numerical solution of the fractional ordinary differential equations}.
{J. Comput. Phys.}  \textbf{238}, 154--168 (2013)}


\bibitem{ChenLiu-etal2012}
Chen, J., Liu, F., Anh, V., Shen, S., Liu, Q., Liao, C.:
The analytical solution and numerical solution of the fractional diffusion-wave equation with damping.
Appl. Math. Comput. \textbf{219}, 1737--1748 (2012)

\bibitem{CuestaLubich2006}
{Cuesta, E., Lubich,  C., Palencia, C.:}
{Convolution quadrature time discretization of fractional diffusive-wave equations}.
{Math. Comp.}  \textbf{75},  673--696 (2006)


{\bibitem{DieFF04}
{Diethelm, K, Ford, N.J., Freed, A.D.:}
{Detailed error analysis for a fractional Adams method}.
{Numer. Algorithms}  \textbf{36}, 31--52 (2004)}

\bibitem{DiethelmFordFreedWeilbeer2006}
{Diethelm, K., Ford, N.J., Freed, A.D., Weilbeer, M.:}
{Pitfalls in fast numerical solvers for fractional
differential equations}. {J. Comput. Appl. Math.}  \textbf{186}, {482--503} (2006)



\bibitem{DingLi2013}
Ding, H.F., Li, C.P.:
Numerical algorithms for the fractional diffusion-wave equation with reaction term.
Abstr. Appl. Anal. \textbf{2013}, 493406 (2013)

\bibitem{DuCaoSun2010}
Du, R., Cao, W.R.,   Sun, Z.Z.:
A compact difference scheme for the fractional diffusion-wave equation.
Appl. Math. Model.  \textbf{34}, 2998--3007 (2010)


%
%
%
\bibitem{Hanygad2002}
Hanygad, A.:
Multidimensional solutions of time-fractional diffusion-wave equations.
Proc. R. Soc. Lond. A \textbf{458}, 933--957 (2002)

\bibitem{HuangTang2013}
Huang, J., Tang, Y., V\'{a}zquez, L., Yang, J.:
Two finite difference schemes for time fractional diffusion-wave equation.
Numer. Algorithms  \textbf{64}, 707--720 (2013)


\bibitem{JafariAminataei2011}
Jafari, M.A., Aminataei, A.:
An algorithm for solving multi-term diffusion-wave equations of fractional order.
Comput. Math. Appl. \textbf{62}, 1091--1097 (2011)

\bibitem{JafariMomani2007}
Jafari, H., Momani, S.:
Solving fractional diffusion and wave equations by modified homotopy perturbation method.
Physics Letters A \textbf{370}, 388--396 (2007)

%

\bibitem{JinLazarovZhou2014}
Jin, B., Lazarov, R.,  Zhou, Z.:
On two schemes for fractional diffusion and diffusion-wave equations.
arXiv:1404.3800  (2014)



{\bibitem{LiZeng13}
Li, C.P., Zeng, F.H.:
The finite difference methods for fractional ordinary differential equations.
Numer. Funct. Anal. Opt. \textbf{34}, 149--179 (2013)}


\bibitem{LiXuLuo2013}
Li, L.M., Xu, D., Luo, M.:
Alternating direction implicit Galerkin finite element method for the two-dimensional fractional diffusion-wave equation.
J. Comput. Phys. \textbf{255}, 471--485 (2013)

{\bibitem{LinLiu07}
Lin, R.,  Liu F.:
Fractional high order methods for the nonlinear fractional ordinary
differential equation.
Nonlinear Analysis \textbf{66}, 856--869 (2007)}



\bibitem{FLiu2013}
Liu, F., Meerschaert, M.M., McGough, R.J., Zhuang, P.,  Liu, Q.:
Numerical methods for solving the multi-term timefractional wave-diffusion equation.
Fract. Calc. Appl. Anal.   \textbf{1}, 69--25 (2013)

\bibitem{Lubich1986}
{Lubich, C.:}  {Discretized fractional calculus}.
{SIAM J. Math. Anal.}  \textbf{17}, (1986)   {704--719}

\bibitem{Lubich1986b}
{Lubich, C.:} {A stability analysis of convolution quadratures for Abel--Volterra integral equations}.
{IMA J Numer. Anal.}  \textbf{6}, (1986)  {87--101}

\bibitem{LuchkoMainardi2013}
Luchko, Y., Mainardi, F., Povstenko, Y.:
Propagation speed of the maximum of the fundamental solution to the fractional diffusion-wave equation.
Comput. Math. Appl. \textbf{66}, (2013) 774--784

\bibitem{Mainardi95}
Mainardi, F.:
The  time  fractional  diffusion-wave equation.
Radiophydcs  and  Quantum  Electronics  \textbf{38}, (1995) 13--24

\bibitem{MaoXiao2014}
Mao, Z, Xiao, A.G, Yu, Z.G, Shi, L:
Sinc-Chebyshev collocation method for a class of fractional diffusion-wave equations.
The Scientific World Journal \textbf{2014}, 143983 (2014)

\bibitem{McLeanMustapha2007}
McLean, W., Mustapha, K.:
A second-order accurate numerical method for a fractional wave equation.
Numer. Math. \textbf{105}, (2007) 481--510



\bibitem{MetzlerNonnenmacher2002}
Metzler, R., Nonnenmacher, T.F.:
Space- and time-fractional diffusion and wave
equations, fractional Fokker-Planck equations, and physical motivation.
Chemical Physics \textbf{284}, 67--90 (2002)


\bibitem{MurilloYuste2011}
Murillo, J.Q., Yuste, S.B.:
An explicit difference method for solving fractional diffusion and diffusion-wave equations in the Caputo form.
J. Comput. Nonlin. Dyn.  \textbf{6}, 021014 (2011)

\bibitem{MurilloYuste2013}
Murillo, J.Q., Yuste, S.B.:
A finite difference method with non-uniform timesteps for fractional diffusion and diffusion-wave equations.
Eur. Phys. J. Special Topics \textbf{222}, 1987--1998 (2013)
%

\bibitem{MustaphaMcLean2013}
{Mustapha, K., McLean, W.:}
{Superconvergence of a discontinuous Galerkin method for
fractional diffusion and wave equations}.
SIAM J. Numer. Anal.  \textbf{51}, 491--515 (2013)

\bibitem{Podlubny1999}
Podlubny, I.: {Fractional Differential Euations}.
Acdemic Press, San Dieg  (1999)

\bibitem{RenSun2013}
Ren, J.C., Sun, Z.Z.: Numerical algorithm with high spatial
accuracy for the fractional diffusion-wave equation with Neumann
boundary conditions. J. Sci. Comput.  \textbf{56}, 381--408 (2013)


\bibitem{SchneiderWyss89}
Schneider, W.R., Wyss, W.:
Fractional diffusion and wave equations. J. Math. Phys. 30, 134--144(1989)
\bibitem{Sun06}

{Sun, Z.Z., Wu  X.N.:}  A fully discrete difference scheme
for a diffusion-wave system. {Appl. Numer. Math.}  \textbf{56}, 193--209 (2006)


\bibitem{SweilamKhaderAdel2012}
Sweilam, N.H., Khader, M.M., Adel, M.:
On the stability analysis of weighted average finite difference methods for fractional wave equation.
Fractional Differential Calculus  \textbf{2}, 17--29 (2012)
%

\bibitem{Vazquez2011}
V\'{a}zquez, L.:
From Newton's equation to fractional diffusion and wave equations.
Adv. Differ. Equ. \textbf{2011},   169421 (2011)


\bibitem{YangHuang2014}
Yang, J.Y., Huang, J.F., Liang, D.M., Tang, Y.F.:
Numerical solution of fractional diffusion-wave equation based on fractional multistep method.
Appl. Math. Model. \textbf{38}, 3652--3661 (2014)

\bibitem{ZengLi2013a}
{Zeng, F.H., Li, C.P., Liu, F., Turner, I.:}
{The use of finite difference/element approaches for solving the time-fractional subdiffusion equation}.
SIAM J. Sci. Comput.   \textbf{35}, A2976--A3000 (2013)

{\bibitem{ZengLi2014}
{Zeng, F.H., Li, C.P., Liu, F., Turner, I.:}
{Numerical algorithms for time-fractional subdiffusion equation with second-order accuracy}.
SIAM J. Sci. Comput.,   (2014), in press.}


%



\bibitem{ZhangSun2012}
{Zhang, Y.N., Sun, Z.Z., Zhao, X.:}
{Compact alternating direction implicit scheme for the two-dimensional fractional diffusion-wave equation}.
{SIAM J. Numer. Anal.}  \textbf{50}, 1535--1555 (2012)







\end{thebibliography}


\end{document}